\newtheorem{thm}{Theorem}[section]
\newtheorem{cor}[thm]{Corollary}
\newtheorem{prop}[thm]{Proposition}
\newtheorem{lem}[thm]{Lemma}
\newtheorem{conj}[thm]{Conjecture}
\theoremstyle{definition}
\newtheorem{defn}[thm]{Definition}
\newtheorem{exmp}[thm]{Example}
\theoremstyle{remark}
\newtheorem{rem}[thm]{Remark}
\DeclareMathOperator{\gon}{gon}
\DeclareMathOperator{\Pic}{Pic}
\DeclareMathOperator{\irr}{irr}
\DeclareMathOperator{\Ext}{Ext}
\DeclareMathOperator{\Cliff}{Cliff}
\newcommand{\Sym}{{{\textrm{Sym}}}}
\newcommand{\length}{{{\textrm{length}\,}}}
\newcommand{\Ker}{{{\textrm{Ker}\,}}}
\renewcommand{\Im}{{{\textrm{Im}\,}}}
\newcommand{\G}{{{\mathbb{G}}}}
\renewcommand{\P}{{{\mathbb{P}}}}
\newcommand{\E}{{{\mathcal{E}}}}
\newcommand{\Z}{{{\mathbb{Z}}}}
\newcommand{\newword}[1]{\textbf{\emph{#1}}}
\newcommand{\str}{\mathcal{O}}
\newcommand{\PP}{\mathbb{P}}
\newcommand{\OO}{\mathcal{O}}
\newcommand{\ZZ}{\mathbb{Z}}
\newcommand{\arinj}{\ar@{^{(}->}}
\newcommand{\arsurj}{\ar@{->>}}
\numberwithin{equation}{section}
\title{The gonality of complete intersection curves}
\author{James Hotchkiss}
\address{Department of Mathematics\\ University of Michigan\\ 530 Church Street,
Ann Arbor, MI 48109}
\email{htchkss@umich.edu}
\author{Chung Ching Lau}
\address{Department of Mathematics, Statistics and Computer Science \\University of Illinois at Chicago, Chicago, IL 60607}
\email{clau22@uic.edu}
\author{Brooke Ullery}
\address{Department of Mathematics\\ Harvard University\\ 1 Oxford Street,
Cambridge, MA  02138}
\email{bullery@math.harvard.edu}
\begin{document}

\maketitle

\begin{abstract}
The purpose of this paper is to show that for a complete intersection curve $C$ in projective space (other than a few exceptions stated below), any morphism $f: C \to \P^r$ satisfying $\deg f^*\str_{\P^r}(1) <\deg C$ is obtained by projection from a linear space. In particular, we obtain bounds on the gonality of such curves and compute the gonality of general complete intersection curves. We also prove a special case of one of the well-known Cayley-Bacharach conjectures posed by Eisenbud, Green, and Harris.
\end{abstract} 

\section{Introduction}

Let $C$ be a complex projective curve. Recall that the \textit{gonality} of $C$, $\gon(C)$, is the minimum degree of a surjective morphism 
$$\tilde{C} \longrightarrow \P^1,$$ where $\tilde{C}$ is the normalization of $C$. Thus, $C$ is rational precisely when $\gon (C) = 1$, and, more generally, gonality measures how far the curve is from being rational. Gonality is a classical invariant, and there has been significant interest in bounding the gonality of various classes of curves  and characterizing the corresponding maps to $\P^1$. Specifically, if $C$ is embedded in projective space, it is natural to ask whether the gonality is related to the embedding of the curve.

For example, the gonality of plane curves (i.e. complete intersection curves of codimension one) is well understood. If $C \subset \P^2$ is a smooth curve of degree $d$, then the map $C \to \P^1$ obtained by projecting from a point in $C$ has degree $d-1$, giving an upper bound on the gonality. In fact, a classical theorem of Noether states that if $d \geq 3$, then $$\gon(C) = d-1,$$ and any covering of $\P^1$ of degree $d-1$ is obtained by projecting from a point.

More generally, if $C$ is a smooth degree $d$ curve in projective space and $V$ a base point free linear system of degree at most $d$, one may ask when the morphism defined on $C$ by $V$ is obtained by projection from a linear subspace of codimension $\dim(V) +1$. This question was first studied for complete intersection curves in $\P^3$ by Ciliberto and Lazarsfeld \cite{CL} and later by Basili \cite{Bas}. The former authors studied this question for $\dim(V) =2$, the latter for $\dim(V) =1$. Specifically, Basili showed that if $C \subset \P^3$ is a smooth complete intersection curve, then the gonality is indeed computed by projection from a line and every minimal covering arises in this way. Recently, Hartshorne and Schlesinger generalized Basili's results to smooth ACM curves in $\P^3$ satisfying some assumption of generality, with the exception of a few cases \cite{HS}. The same result holds for many other specific classes of curves in $\P^3$ (e.g. \cite{Bal}, \cite{EF}, \cite{Far}, \cite{Har}, \cite{Mar}). See \cite{HS} for a detailed review on curves in $\P^3$ whose gonality is computed by projection from a line.

At the other extreme, i.e. higher dimensional hypersurfaces, the recent paper \cite{BDELU} calculated the so-called degree of irrationality of very general hypersurfaces in projective space. If $X$ is a smooth variety of dimension $n$, then the \textit{degree of irrationality} of $X$, $\irr(X)$, is the minimum degree of a dominant rational map $X \dashrightarrow \P^n$. Clearly, this definition agrees with the definition of gonality in the $n=1$ case, and $\irr(X) = 1$ if and only if $X$ is rational. The main result of \cite{BDELU} states that if $X \subset \P^{n+1}$ is a very general hypersurface of degree $d \geq 2n+1$, then $\irr(X) = d-1$, and if $d \geq 2n+2$, then any dominant map $X \dashrightarrow \P^n$ is obtained by projecting from a point on $X$.

Returning to curves, the main result about gonality of complete intersection curves in higher dimensional projective spaces to date is a lower bound due to Lazarsfeld: 
\begin{thm}[cf. {{{\cite[Exercise 4.12]{Laz}}}}]\label{Laz}
 Let $C \subset \P^{n}$ be a smooth complete intersection curve of type 
$(a_1, a_2, \ldots, a_{n-1}),$ where $2 \leq a_1 \leq \cdots \leq a_{n-1}$. Then $$\gon(C) \geq (a_1 - 1)a_2 \cdots a_{n-1}.$$ \end{thm}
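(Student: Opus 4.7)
The plan is to argue by contradiction: assume that $C$ admits a base-point-free $g^1_k$ with $k < (a_1-1)a_2\cdots a_{n-1}$, and derive an impossibility from the complete intersection structure. Let $D$ be a general divisor in the pencil, so $\deg D = k$ and $h^0(\str_C(D)) \geq 2$. By the adjunction formula, $K_C = \str_C(e)$ with $e := \sum_i a_i - n - 1$, and Riemann-Roch then gives $h^0(K_C - D) \geq g - k + 1$. Thus $D$ fails by at least one to impose independent conditions on the canonical system $|K_C|$. Projective normality of $C$ (automatic for complete intersections) identifies $H^0(\str_C(e))$ with the image of $H^0(\P^n,\str(e))$ under restriction, so the same failure occurs on the linear system of degree-$e$ hypersurfaces in $\P^n$.

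The main step is then to translate this ``high-degree'' failure at degree $e$ into the ``low-degree'' numerical bound $k \geq (a_1 - 1)a_2\cdots a_{n-1}$, where the combinatorics becomes visible. The natural tool is Cayley-Bacharach for complete intersections: pick a general auxiliary hypersurface $F$ of some degree $m$ meeting $C$ transversely, write $F \cdot C = D + D'$, so that $D \sqcup D'$ is a zero-dimensional complete intersection in $\P^n$ of type $(a_1,\ldots,a_{n-1},m)$. Cayley-Bacharach then converts the failure of $D$ on $|\str(e)|$ into an equal failure of $D'$ on $|\str(m)|$; a judicious choice of $m$ together with a Hilbert-function analysis of $D' \subset F$ (using that $D'$ already lies on the complete intersection $C \cap F$) should force $k \geq (a_1-1)a_2\cdots a_{n-1}$. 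A geometrically cleaner alternative is to pass to the complete intersection surface $S := F_2 \cap \cdots \cap F_{n-1}$ containing $C$ as the divisor class $a_1 H_S$, with $H_S$ the hyperplane class, and use the restriction sequence
\[
0 \longrightarrow \str_S(-H_S) \longrightarrow \str_S\bigl((a_1-1)H_S\bigr) \longrightarrow \str_C(a_1-1) \longrightarrow 0,
\]
together with vanishing of $H^1(\str_S(-H_S))$ (which can be extracted from the Koszul resolution of $\str_S$ on $\P^n$), to lift sections of $\str_C(a_1-1)$ to curves on $S$ of class $(a_1 - 1)H_S$, whose degree is exactly $(a_1-1)a_2\cdots a_{n-1}$.

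The main obstacle I expect is precisely this ``jump'' from the canonical degree $e$, where the failure of $D$ is manifest, down to the gonality-relevant degree $a_1 - 1$: the sections of $K_C - D$ live in very high degree, and extracting from them the correct geometry at degree $a_1 - 1$ demands the complete intersection structure in an essential way. The case $n = 2$ is Noether's theorem and is immediate from Riemann-Roch on a plane curve; the case $n = 3$ is the content of Basili's theorem \cite{Bas} and already requires nontrivial argument on the surface $F_1$; for general $n$ I would anticipate either an induction on the codimension of $C$, or a uniform vector-bundle treatment exploiting (semi)stability of the Lazarsfeld-Mukai bundle of the pencil $|D|$ lifted to $S$.
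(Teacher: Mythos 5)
First, a point of reference: the paper does not prove this statement at all --- it is quoted from Lazarsfeld (Exercise 4.12 of \cite{Laz}) and used as a known lower bound. The closest the paper comes to a proof is Lemma \ref{lem:key lemma}(2), which gives $\deg f^*\str_{\P^r}(1) \geq (\alpha-1)\deg S$ --- exactly $(a_1-1)a_2\cdots a_{n-1}$ in the complete intersection case --- but only under the stronger hypotheses that $C$ lies on a smooth surface $S$ with $\Pic(S)=\Z\cdot[\str_S(1)]$ and $\alpha \geq 4$, which do not cover the full range $2 \leq a_1 \leq \cdots \leq a_{n-1}$ of the statement (nor, e.g., the non-very-general surfaces needed when the $a_i$ coincide).

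Your opening reduction is correct and standard: a general divisor $D$ of the gonality pencil fails by at least one to impose independent conditions on $|K_C|=|\str_C(e)|$ with $e=\sum a_i - n-1$, and projective normality transfers this failure to $|\str_{\P^n}(e)|$. But the argument stops there: the step that actually produces the number $(a_1-1)a_2\cdots a_{n-1}$ is only described as something a ``judicious choice'' and a ``Hilbert-function analysis'' \emph{should} force, and neither route you sketch closes it. In the liaison route, writing $F\cdot C = D + D'$ (note $F$ must \emph{contain} $D$, not merely meet $C$ transversely) and applying Cayley--Bacharach converts the failure of $D$ in degree $e$ into a statement about the residual $D'$ in degree $m$; since $\deg D' = m\,a_1\cdots a_{n-1} - \deg D$ grows with $m$, this naturally yields an \emph{upper} bound on $\deg D$, not a lower one. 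What is actually needed is a lower bound on the Hilbert function $h_D(e)$ of $D$ itself (e.g.\ via uniform position or a monotonicity argument on $C$), and no mechanism for that is proposed. In the surface route, lifting sections of $\str_C(a_1-1)$ to curves in $|(a_1-1)H_S|$ is fine, but nothing in the sketch connects those curves to the divisor $D$: containment of $D$ in such a curve would give an upper bound $\deg D \leq a_1(a_1-1)\deg S$ from the intersection number, not the desired lower bound. The missing ingredient is precisely the Serre construction plus Bogomolov instability (the engine of Lemma \ref{lem:key lemma}), which attaches to $D$ a rank-two bundle on $S$ with $c_1=a_1H$, $c_2=\deg D$ and whose destabilizing sequence forces $\deg D \geq (a_1-1)\deg S$ --- and even that requires $\Pic S = \Z$, hence a Noether--Lefschetz input beyond the hypotheses of the theorem. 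As written, the proposal is an accurate map of the difficulty rather than a proof of the bound.
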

\noindent  In light of the previous examples, one may ask whether every such map is given by projection. Our main result confirms this in a more general setting as long as $C$ satisfies mild degree restrictions:

\begin{thm}\label{thma}
	Let $C \subset \P^{n}$ be a complete intersection curve of type $(a_1, \dots, a_{n-1 })$, with $$4 \leq a_1 < a_2 \leq \cdots \leq a_{n-1}.$$ Then for $r < n$, any morphism $f: C \to \P^r$ satisfying $$\deg f^* \str_{\P^r}(1) < \deg (C)$$ is obtained by projecting from an $(n-r-1)$-plane. Thus $\gon(C) = \deg (C) - \gamma$, where $\gamma$ is the maximum number of points on $C$ contained in an $(n-2)$-plane.
\end{thm}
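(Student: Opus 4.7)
Write $V \subset H^0(C, L)$ for the base-point-free linear system defining $f$, with $\dim V = r+1$ and $\deg L = d < \deg C$. The goal is to produce an effective divisor $\Gamma$ on $C$ of degree $\deg C - d$, supported on an $(n-r-1)$-plane $\Lambda \subset \P^n$, such that $L \cong \mathcal{O}_C(1)(-\Gamma)$ and $V$ equals the image of $H^0(\P^n, \mathcal{I}_\Lambda(1))$ in $H^0(C,L)$. Once this is done, $f$ is literally the projection away from $\Lambda$, and the gonality formula then follows by specializing to $r = 1$: among $(n-2)$-planes $\Lambda$, projection from $\Lambda$ produces a morphism $C \to \P^1$ of degree $\deg C - |\Lambda \cap C|$, so the minimum degree is $\deg C - \gamma$.

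The heart of the argument is a Cayley--Bacharach duality applied to a general fiber $D \in |L|$. By Theorem~\ref{Laz}, the deficit $\deg C - d \leq a_2 \cdots a_{n-1}$ is controlled. Pick an auxiliary hypersurface $Y \subset \P^n$ of feasible degree $s$ containing $D$ (possible as soon as $h^0(C, \mathcal{O}_C(s)(-D)) > 0$), so that $Z := C \cap Y = D + D'$ is a $0$-dimensional complete intersection in $\P^n$ of type $(a_1, \ldots, a_{n-1}, s)$. Classical Cayley--Bacharach on $Z$ then relates the failure of $D$ to impose independent conditions on $|\mathcal{O}_{\P^n}(t)|$ to the failure of $D'$ to impose independent conditions on $|\mathcal{O}_{\P^n}(\sigma + s - t)|$, where $\sigma = a_1 + \cdots + a_{n-1} - n - 1$. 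Since $D$ moves in the $(r+1)$-dimensional linear system $V$, it imposes strictly fewer than $d$ conditions on hyperplanes, and the Cayley--Bacharach trade forces the support of $D$ into an $(n-r)$-plane $\Pi_q \subset \P^n$.

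Letting $q \in \P^r$ vary yields a family of such planes $\Pi_q$, and the final step is to pin down a common $(n-r-1)$-plane $\Lambda$ contained in every $\Pi_q$. The residual divisor $\Gamma_q := (C \cap \Pi_q) - D_q$ has constant degree $\deg C - d$; using the freedom of $V$ together with the rigidity of the complete-intersection embedding of $C$, one argues that $\Gamma_q$ is in fact constant in $q$, so $\Gamma_q = C \cap \Lambda$ for a fixed $(n-r-1)$-plane $\Lambda$. The principal obstacle is the sharp Cayley--Bacharach input forcing the support of $D$ into an $(n-r)$-plane---this is almost certainly where the special case of the Eisenbud--Green--Harris Cayley--Bacharach conjecture advertised in the abstract plays its essential role, as the classical statement will not by itself yield the required linear-span conclusion. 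The degree hypotheses enter precisely at this step: $a_1 \geq 4$ is needed so that the auxiliary linear system $|\mathcal{O}_{\P^n}(s)|$ has enough room for the Cayley--Bacharach duality to bite, while the strict inequality $a_1 < a_2$ ensures that the degree-$a_1$ hypersurface containing $C$ is unique, which rules out exceptional morphisms $f$ that could otherwise evade the projection description.
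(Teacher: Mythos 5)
Your architecture diverges from the paper's, and the divergence opens a genuine gap at the central step. The paper proves Theorem \ref{thma} by first producing (via the generalized Noether--Lefschetz result, Theorem \ref{thm:main theorem}) a smooth complete intersection surface $S \supset C$ of type $(a_2,\dots,a_{n-1})$ with $\Pic(S) = \Z\cdot[\str_S(1)]$, and then proving Theorem \ref{thmc} on that surface: a general fiber $\Gamma$ satisfies Cayley--Bacharach with respect to $|K_C| = |K_S + \alpha H|_C$, the Serre/Griffiths--Harris construction turns $\Gamma$ into a rank-two bundle $\E$ on $S$, the inequality $\alpha \geq 4$ makes $\E$ Bogomolov unstable, and the Picard-rank-one hypothesis forces the destabilizing quotient to be $\str_S(1)\otimes\I_Z$ --- which is exactly what puts $\Gamma$ in a hyperplane and yields the sharp bound $\deg f^*\str_{\P^r}(1) \geq (\alpha-1)\deg S$. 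Your proposal never introduces the surface, and this is not a cosmetic omission: it is the mechanism by which a numerical Cayley--Bacharach statement is converted into a linear-span statement.

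Concretely, the step ``the Cayley--Bacharach trade forces the support of $D$ into an $(n-r)$-plane'' is unsupported. Classical Cayley--Bacharach duality on the zero-dimensional complete intersection $Z = D + D'$ tells you that $D$ fails to impose independent conditions on some $|\str_{\P^n}(t)|$ exactly when $D'$ fails on the complementary degree; it gives no control over the linear span of $D$. Your fallback --- invoking the Eisenbud--Green--Harris conjecture CB12 --- is not available as an input: CB12 is open in general, the special case established in this paper (Theorem \ref{CBconj}) is itself \emph{deduced} from the same Bogomolov-instability machinery on the Picard-rank-one surface (so using it here inverts the logical order of the paper), and even granting it, its conclusion is only a lower bound on $\deg \Gamma$, not a statement that $\Gamma$ is degenerate. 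Your final step also leans on unproven input: the constancy of the residual divisors $\Gamma_q$ is established in the paper (Lemma \ref{thmforP1}) by ruling out a base-point-free pencil of degree $\leq \deg S$, which requires the gonality bound $\gon(C) \geq (\alpha-1)\deg S$ from Lemma \ref{lem:key lemma}(2) --- again a product of the vector bundle argument you have discarded. Finally, your reading of the hypotheses is off: $a_1 < a_2$ is needed so that the Noether--Lefschetz theorem applies to the surface of type $(a_2,\dots,a_{n-1})$ (not for uniqueness of the degree-$a_1$ hypersurface), and $a_1 \geq 4$ is what makes $c_1(\E)^2 - 4c_2(\E) = a_1^2 \deg S - 4d_\Gamma$ positive.
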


\begin{rem}
In fact, we can weaken the hypotheses of Theorem \ref{thma} slightly. As long as $C$ lies on a smooth complete intersection threefold of type $(w_3, \cdots,  w_{n-1})$ and the remaining two degrees $a$ and $b$ cutting out $C$ satisfy $4 \leq a <b$, then the conclusion of the theorem holds.
\end{rem}
For very general complete intersection curves, the same conclusion of Theorem \ref{thma} holds even if we relax the bounds on the degrees; we only require that the largest degree is at least 4, and the sum of the remaining degrees is at least $n+1$. See Theorem \ref{thma1} for the full statement.


By studying the secancy of $(n-2)$-planes to complete intersection curves in Section \ref{n-2}, we are able to give a bound on the gonality of arbitrary complete intersection curves.
\begin{thm}\label{thmb}
Let $C\subset \mathbb{P}^n$ be a complete intersection curve of type $(a_1,\cdots,a_{n-1})$, with $2\leq a_1\leq a_2\leq \cdots \leq a_{n-1}$.
Then $C$ has an $(n-2)$-plane which is at least  $(2n-2)$-secant  to $C$ unless
$$n=3 \ \mbox{and} \ (a_1,a_2)=(2,2), (2,3), (3,3); \ \mbox{or} \ n=4 \  \mbox{and} \ (a_1,a_2,a_3)=(2,2,2).$$
In particular, $\gon(C)\leq \deg(C)-2n+2$.
\end{thm}

If we make a stronger assumption on the degrees, we can compute the value of $\gamma$ from Theorem \ref{thma} for $C$ general. This leads to the following theorem. (One may find a more comprehensive statement in Theorem \ref{thmmain}.)

\begin{thm}\label{corb}
	Let $C \subset \P^n$ be a general complete intersection curve of type $(a_1, \ldots, a_{n-1})$, with $$ \max\{4, n-1\} \leq a_1 <  a_2 \leq \cdots \leq a_{n-1}$$ and $n \geq 4$.
	 Then $$ \gon(C) = \deg(C) - 2n +2.$$
	 Moreover, there are only finitely many $(2n-2)$-secant $(n-2)$-planes, and each of them intersects $C$ at $2n-2$ distinct points.
	 Furthermore, the Clifford index of $C$ equals $\gon(C)-2=\deg(C) - 2n$.
\end{thm}

The proof of Theorem \ref{thma} relies on a generalization of the classical Noether-Lefschetz Theorem \cite{Lef}, which we prove in Section 3. It states that, under certain degree restrictions, a complete intersection curve in projective space lies on a complete intersection surface with Picard group generated by the hyperplane class. However, Theorem \ref{thma} also holds in the more general setting of arbitrary curves lying on surfaces with Picard group $\Z$:

\begin{thm}\label{thmc}
	    Let $C \subset \P^{n}$ be a smooth non-degenerate curve lying on a smooth surface $S$ with $\Pic(S) = \Z \cdot [ \str_S (1)]$, and $C \in |\str_S(\alpha)|$, where $\alpha \geq 4$. Then any morphism $f: C \to \P^r$ with $r<n$ satisfying $$\deg f^* \str_{\P^r}(1) < \deg (C)$$ is obtained by projecting from an $(n-r-1)$-plane.
	\end{thm}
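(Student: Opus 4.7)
The plan is to let $L := f^*\str_{\P^r}(1)$, $d := \deg L$, and $V \subseteq H^0(C, L)$ the $(r+1)$-dimensional base-point-free subspace inducing $f$, and to exhibit an effective divisor $Z \subset C$ of degree $\deg C - d$ whose linear span in $\P^n$ is an $(n-r-1)$-plane $\Lambda$, together with an identification of $V$ with the subspace of hyperplane sections of $C$ vanishing on $Z$. More precisely, setting $R := \str_C(1) \otimes L^\vee$ (of positive degree $\deg C - d$), I would produce $Z$ as the common fixed part of $|R|$ as the fiber of $f$ varies, and identify $V$ with the $(r+1)$-dimensional space of hyperplanes through $\Lambda$ via multiplication by the defining section of $Z$. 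This exhibits $f$ as the restriction to $C$ of projection from $\Lambda$.

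To produce sections of $R$, I pass to $S$. Fixing a general fiber $F = (s)$ of $f$, the bundle $R$ is isomorphic to $\str_C(1) \otimes \str_C(-F)$ via $s$, so sections of $R$ correspond to hyperplane sections of $C$ passing through $F$. The short exact sequence
$$0 \to \str_S(1-\alpha) \to \str_S(1) \otimes \I_F \to \str_C(1) \otimes \str_C(-F) \to 0$$
on $S$, together with $H^0(S, \str_S(1-\alpha)) = 0$ (by ampleness of $\str_S(1)$ for $\alpha \geq 2$) and $H^1(S, \str_S(1-\alpha)) = 0$ (by Serre duality and Kodaira applied to $K_S \otimes \str_S(\alpha-1)$, also for $\alpha \geq 2$), identifies $H^0(C, R)$ with $H^0(S, \str_S(1) \otimes \I_F)$. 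The same strategy with $\str_S(k)$ replacing $\str_S(1)$ captures higher-degree residuals as needed.

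The central step --- and the main obstacle --- is to use $\Pic(S) = \Z \cdot [\str_S(1)]$ together with $\alpha \geq 4$ to pin down a common subscheme $Z$ across the family of fibers $F_q$ of $f$, and to show that $\langle Z \rangle$ has dimension at most $n-r-1$. Because $\Pic(S) = \Z$, every curve on $S$ is a pure multiple of the hyperplane class, so hyperplane sections of $S$ through a given fiber $F_q$ cannot decompose in unexpected ways. As $q$ varies in $\P^r$, the $F_q$ trace out an $r$-dimensional family; I expect a Cayley-Bacharach-style linkage argument on $S$ to force all hyperplane sections of $S$ through the $F_q$ to share a common residual subscheme $Z$. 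The hypothesis $\alpha \geq 4$ enters via adjunction (writing $K_S = \str_S(k_S)$ using $\Pic(S) = \Z$, the canonical bundle $K_C = \str_C(k_S + \alpha)$ must have enough positivity for Cayley-Bacharach closure to apply) and to exclude low-degree exceptional configurations where $C$ could admit an unexpected pencil.

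Once $Z$ is located with $\Lambda := \langle Z \rangle$, the conclusion follows from dimension counts and non-degeneracy of $C$: hyperplanes through $\Lambda$ form an $(r+1)$-dimensional subspace of $H^0(\P^n, \str_{\P^n}(1) \otimes \I_\Lambda)$, which injects into $H^0(C, \str_C(1) \otimes \I_Z)$ and, after dividing by the defining section of $Z$, yields an $(r+1)$-dimensional subspace of $H^0(C, L)$ that must coincide with $V$. Hence $f$ factors as projection from $\Lambda$ restricted to $C$.
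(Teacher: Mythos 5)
The architecture of your proposal is sound and in fact mirrors the second half of the paper's argument: once one knows that each fiber spans a hyperplane, that no two fibers share a hyperplane, and that the residuals $h - F_q$ coincide as cycles, the identification of $f$ with projection from the common $(n-r-1)$-plane goes through essentially as you describe (the paper does the $r=1$ case first and reduces general $r$ to it by composing with projections from $(r-2)$-planes). But the step you flag as ``the central step --- and the main obstacle'' is exactly the part you have not proved, and it is the technical heart of the theorem. Your exact sequence computing $H^0(C, \str_C(1)\otimes\str_C(-F))$ via $H^0(S, \str_S(1)\otimes\I_F)$ is only useful if that space is nonzero, i.e.\ if $F$ already lies in a hyperplane; a fiber has degree up to $\alpha\deg S - 1$, which vastly exceeds $n$, so there is no a priori reason for this. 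A ``Cayley--Bacharach-style linkage argument'' does not suffice here: the Cayley--Bacharach property of $F$ with respect to $|K_C|$ (equivalently $|K_S + \alpha H|$ by adjunction) is the \emph{input}, and converting it into the geometric statement that $F$ lies in a hyperplane requires the Serre/Griffiths--Harris construction of a rank-two bundle $\E$ on $S$ with $c_1(\E) = \alpha H$, $c_2(\E) = \deg F$, followed by Bogomolov's instability theorem. The hypothesis $\alpha \geq 4$ enters precisely in the discriminant inequality $\alpha^2 \deg S - 4\deg F > 0$ and in the numerical pinning of the destabilizing sub-line-bundle to $\str_S(\alpha - 1)$, not in any soft positivity of $K_C$.

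A second, related gap: the same instability analysis yields the lower bound $\deg f^*\str_{\P^r}(1) \geq (\alpha - 1)\deg S$, and this bound is indispensable for the three uniqueness statements your endgame relies on --- that a fiber cannot lie in an $(n-2)$-plane (else projection from it would give a map of degree $\leq \deg S$, below the gonality bound), that two distinct fibers cannot lie in the same hyperplane (else $2(\alpha-1)\deg S < \alpha \deg S$, impossible for $\alpha \geq 4$), and that the residuals $h_1 - F_1$ and $h_2 - F_2$ must be equal as cycles (else one manufactures a base-point-free pencil of degree $\leq \deg S$). Without establishing $(\alpha-1)\deg S$ as a lower bound for the degree of any such linear series, your ``common fixed part'' $Z$ cannot be pinned down. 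So the proposal, as written, assumes the conclusion of the paper's key lemma rather than proving it.
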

	
\begin{rem} In \cite{Ras}, Rasmussen independently showed that, under stronger hypotheses, the fibers of a morphism $C \to \P^1$ of degree less than the degree of $C$ must lie in hyperplanes, which follows from our Lemma \ref{lem:key lemma}.
\end{rem}

\begin{rem}
A special case of one of the Cayley-Bacharach conjectures posed by Eisenbud, Green, and Harris (\cite[Conjecture CB12]{EGH}) follows easily from the proof of Theorem \ref{thmc}. We discuss this in Section \ref{sec:CB}.
\end{rem}
	
We also obtain a bound on the gonality in this more general setting:
	
\begin{cor}\label{cord} 
	With $C$ and $S$ as in Theorem \ref{thmc}, $$ \deg(C) - \deg(S) \leq \gon(C).$$ If, in addition, $C$ is linearly normal, then $$\gon(C) \leq \deg(C) - 2n+3.\footnote{
Asher Auel and Dave Jensen showed us how to improve the upper bound from an earlier version of this paper by applying a result of Coppens and Martens \cite{CM}.}$$
\end{cor}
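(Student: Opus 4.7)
My plan is to establish the two inequalities separately, appealing to Theorem \ref{thmc} for the lower bound and to the Coppens-Martens result for the upper bound.

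For the lower bound, if $\gon(C) \geq \deg(C)$, the inequality is trivial (since $\deg S \geq 1$), so assume $\gon(C) < \deg(C)$ and let $f : C \to \P^1$ realize the gonality. By Theorem \ref{thmc}, $f$ is obtained by projection from an $(n-2)$-plane $\Lambda$, and therefore
\[
\gon(C) \,=\, \deg f \,=\, \deg(C) - \ell(C \cap \Lambda),
\]
where $\ell(\cdot)$ denotes length of the scheme-theoretic intersection. It remains to bound $\ell(C \cap \Lambda) \leq \deg(S)$. Since $C \subseteq S$, the inclusion $C \cap \Lambda \subseteq S \cap \Lambda$ holds as subschemes of $\P^n$, so it suffices to show $\ell(S \cap \Lambda) = \deg(S)$. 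For this, I would observe that the pencil of divisors on $S$ cut out by the hyperplanes through $\Lambda$ lies in $|\str_S(1)|$ and has no fixed divisorial component: any such component $D$ would satisfy $D \in |\str_S(d)|$ for some $d \geq 1$ (by the hypothesis $\Pic(S) = \Z \cdot [\str_S(1)]$) and $D \leq H|_S$ for every hyperplane $H$ containing $\Lambda$, forcing $d = 1$ and $D = H|_S$, contradicting the non-constancy of the pencil. Hence the base locus is the zero-dimensional scheme $S \cap \Lambda$, of length $H|_S \cdot H|_S = \deg(S)$.

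For the upper bound, assuming $C$ is linearly normal, my plan is to exhibit an $(n-2)$-plane $\Lambda$ meeting $C$ in a subscheme of length at least $2n-3$, so that projection from $\Lambda$ produces a morphism $C \to \P^1$ of degree at most $\deg(C) - (2n-3) = \deg(C) - 2n + 3$. Linear normality identifies hyperplanes in $\P^n$ (up to scalar) with nonzero sections of $\str_C(1)$, so an $(n-2)$-plane containing an effective divisor $E$ on $C$ corresponds to a two-dimensional subspace of $H^0(C, \str_C(1) \otimes \str_C(-E))$. Consequently, the existence of a $(2n-3)$-secant $(n-2)$-plane is equivalent to the existence of an effective divisor $E$ of degree $2n-3$ on $C$ with $h^0(\str_C(1)(-E)) \geq 2$, which is exactly what the theorem of Coppens and Martens in \cite{CM} provides.

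The main obstacle is the upper bound, where the nontrivial existence statement is packaged into the Coppens-Martens result; the remaining task is to translate their conclusion about linear series on $C$ into the geometric claim about multisecant planes. The lower bound, by contrast, amounts to scheme-theoretic bookkeeping once Theorem \ref{thmc} is in hand, with the Picard-group hypothesis doing the real work by ruling out fixed curves in the pencil.
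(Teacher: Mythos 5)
Your proof is correct, but the lower bound takes a genuinely different route from the paper's. The paper deduces $\deg(C)-\deg(S)\leq \gon(C)$ in one line from Lemma \ref{lem:key lemma}(2), which already asserts that any morphism $f:C\to\P^1$ of degree less than $\deg C$ satisfies $\deg f^*\str_{\P^1}(1)\geq \deg(S)\cdot(\alpha-1)=\deg(C)-\deg(S)$; no appeal to the projection statement of Theorem \ref{thmc} is needed. You instead invoke the full Theorem \ref{thmc} to write the gonality map as a projection from an $(n-2)$-plane $\Lambda$ and then bound $\ell(C\cap\Lambda)\leq \deg(S)$ via the scheme inclusion $C\cap\Lambda\subseteq S\cap\Lambda$, using $\Pic(S)=\Z\cdot[\str_S(1)]$ to show the pencil of hyperplanes through $\Lambda$ has no fixed curve on $S$, so that $S\cap\Lambda$ is a zero-dimensional complete intersection of two hyperplane sections of length $\deg(S)$. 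This is valid and gives a more geometric picture of where the bound comes from, but it routes through a stronger theorem than is necessary. For the upper bound your approach coincides with the paper's: both reduce to the Coppens--Martens theorem and project from the resulting $(2n-3)$-secant $(n-2)$-plane, and your translation between multisecant planes and two-dimensional subspaces of $H^0(\str_C(1)(-E))$ via linear normality is the right one. The one step you omit is verifying the hypothesis under which Theorem A of \cite{CM} applies: the paper checks that non-degeneracy of $S$ forces $\deg(S)\geq n-1$, hence $\deg(C)=\alpha\deg(S)\geq 4n-4\geq 4n-7$, which is the degree threshold Coppens and Martens require. You should include that verification rather than treating their theorem as an unconditional existence statement.
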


The authors would like to thank Izzet Coskun and Lawrence Ein for being very generous in sharing their ideas and expertise in this area.
Additionally, we would like to thank Asher Auel, Ciro Ciliberto, Joe Harris, Dave Jensen, Hannah Larson, Rob Lazarsfeld, Jake Levinson, Ian Shipman, David Stapleton, and Isabel Vogt for many helpful comments and conversations. 
The research of the second author was supported by a Croucher Foundation Postdoctoral Fellowship.
The research of the third author was partially supported by an NSF Postdoctoral Fellowship, DMS-1502687.

The paper is organized as follows.
Section 2 is a short exposition on the Cayley-Bacharach condition, which we use in proving Theorem \ref{thmc}.
In Section 3, we prove a generalization of the classical Noether-Lefschetz Theorem.
In Section 4, we prove Theorems \ref{thma} and \ref{thmc} and Corollary \ref{cord}.
In Section 5, we prove several results about secancy of $(n-2)$-planes to complete intersection curves, which lead to proofs of Theorems \ref{thmb} and \ref{thmc}.
In Section 6, we modify the proof of a lemma in Section 4 to prove a case of a Cayley-Bacharach conjecture posed by Eisenbud, Green, and Harris. 

Concerning conventions, we work throughout over the complex numbers, and we will often switch between divisor and line bundle notation.

\section{The Cayley-Bacharach condition}

Suppose $Z$ is a set of distinct points on a smooth variety $X$ of dimension $n$. Let $L$ be a line bundle on $X$. The set of points $Z$ \textit{satisfies the Cayley-Bacharach condition with respect to}  the complete linear system $|L|$ if every section of $L$ vanishing at all but one of the points of $Z$ also vanishes at the remaining point.

\begin{rem} If $Z$ satisfies the Cayley-Bacharach condition with respect to $|L|$, then $Z$ fails to impose independent conditions on $L$. However, the converse is not true. For instance, if $P_1, P_2, P_3 \in \P^2$ lie on a line $\ell$, and $P_4 \notin \ell$, then the set of points $\{P_1 , P_2, P_3, P_4\}$ does not impose independent conditions on $|\str_{\P^2}(1)|$, but it doesn't satisfy the Cayley-Bacharach condition with respect to  $|\str_{\P^2}(1)|$.
\end{rem}

In the case where $Z$ is a general fiber of a generically finite rational map $X \dashrightarrow \P^n$, by analyzing the trace map, one obtains the following, a special case of \cite[Proposition 4.2]{Bas2}:

\begin{thm}{\cite{Bas2}}
Let $X$ be a smooth variety of dimension $n$, and $X \dashrightarrow \P^n$ a generically finite dominant rational map. Let $Z \subset X$ be a finite reduced fiber. Then $Z$ satisfies the Cayley-Bacharach condition with respect to the canonical linear system $|K_X|$.
\end{thm}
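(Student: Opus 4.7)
The plan is to invoke the trace map on canonical sheaves, whose target is a sheaf with no global sections, and to extract the Cayley--Bacharach condition from the explicit formula for the trace at an étale fiber. First, I would resolve the indeterminacy of the given rational map, producing a smooth projective $\tilde{X}$ with a birational morphism $\pi: \tilde{X} \to X$ and a morphism $\tilde{f}: \tilde{X} \to \P^n$ factoring the original map. Since $Z$ is a finite reduced fiber, we may assume that $\pi$ is an isomorphism over a neighborhood of $Z$ and that $\tilde{f}$ is étale along $Z$ (reducedness of a zero-dimensional fiber on smooth varieties forces étaleness).

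Given $s \in H^0(X, K_X)$, pullback yields $\tilde{s} = \pi^* s \in H^0(\tilde{X}, K_{\tilde{X}})$; this pullback is well-defined and injective because $\pi$ is birational between smooth varieties, and moreover $\tilde{s}$ agrees with $s$ in a neighborhood of $Z$. Grothendieck duality furnishes a trace map $\mathrm{Tr}_{\tilde{f}}: \tilde{f}_* \omega_{\tilde{X}} \to \omega_{\P^n}$; taking global sections, $\mathrm{Tr}_{\tilde{f}}(\tilde{s})$ lies in $H^0(\P^n, \omega_{\P^n}) = 0$, so the trace vanishes identically.

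The heart of the argument is the local computation of the trace at $q = \tilde{f}(Z)$. Choose local coordinates $z_1, \dots, z_n$ on $\P^n$ near $q$ and set $\eta = dz_1 \wedge \cdots \wedge dz_n$, a local generator of $\omega_{\P^n}$. Writing $Z = \{p_1, \dots, p_d\}$, étaleness of $\tilde{f}$ at each $p_i$ implies that $\tilde{f}^* \eta$ generates $\omega_{\tilde{X}}$ near $p_i$, so we may write $\tilde{s} = g_i \cdot \tilde{f}^* \eta$ near $p_i$, with $g_i(p_i) = 0$ if and only if $s(p_i) = 0$. The standard formula for the trace on the étale locus gives $\mathrm{Tr}_{\tilde{f}}(\tilde{s}) = \bigl(\sum_{i=1}^d g_i \circ \sigma_i\bigr) \eta$ near $q$, where $\sigma_i$ is the local section of $\tilde{f}$ through $p_i$. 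Evaluating at $q$ and using the global vanishing of the trace, we obtain $\sum_{i=1}^d g_i(p_i) = 0$. If $s$ vanishes at $p_1, \dots, p_{d-1}$, the first $d-1$ terms vanish and we conclude $g_d(p_d) = 0$, i.e.\ $s(p_d) = 0$. This is precisely the Cayley--Bacharach property of $Z$ with respect to $|K_X|$.

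The main technical obstacle I anticipate is verifying that Grothendieck's abstract trace map specializes to the sum-of-values formula above on the étale locus, and ensuring that the passage from $X$ to the resolution $\tilde{X}$ does not corrupt the trace computation at the unresolved fiber $Z$ (this amounts to checking that the trace, a priori only a rational section of $\omega_{\P^n}$ regular on the étale locus, extends to a global section---a classical statement for generically finite proper morphisms of smooth varieties of the same dimension). Granted these standard facts, the proof is immediate.
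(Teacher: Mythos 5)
The paper offers no proof of this statement---it is quoted directly from \cite{LP} with the remark that it follows ``by analyzing the trace map''---and your argument is exactly that standard trace-map analysis, correctly executed: resolve the indeterminacy, observe that the trace of a global $n$-form lands in $H^0(\P^n,\omega_{\P^n})=0$, and read off $\sum_i g_i(p_i)=0$ from the local sum-over-branches formula on the \'etale locus. The only point worth spelling out is that properness of $\tilde f$ upgrades ``\'etale at each point of $Z$'' to ``finite \'etale over a neighborhood of $q=\tilde f(Z)$'' (in particular no exceptional components of the resolution can appear in the fiber over $q$), which is what legitimizes evaluating the local trace formula at $q$ itself rather than only at nearby general points.
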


If the canonical bundle of $X$ is sufficiently positive, this forces various geometric constraints on the fibers. For instance, if $X$ is a hypersurface, a simple geometric argument shows that under certain degree hypotheses the above theorem implies that the general fiber must be collinear (cf. \cite{BCD}). The authors of \cite{BDELU} exploited this fact to compute the degree of irrationality of very general hypersurfaces.

For our purposes, we will only be dealing with the case in which $X$ is a curve. However, in this case, every such map is actually a morphism, and a much more general result follows easily from the Riemann-Roch Theorem:

\begin{thm}
\label{CBcurves}
Let $C$ be a smooth curve and $f: C \to \P^r$ a morphism. Then any reduced divisor $Z \in |f^*\str_{\P^r}(1)|$ satisfies the Cayley-Bacharach condition with respect to $|K_C|$.
\end{thm}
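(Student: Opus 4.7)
The plan is to translate the Cayley--Bacharach condition for a divisor $Z \in |L|$ (where $L = f^*\str_{\P^r}(1)$) with respect to $|K_C|$ into a statement about cohomology of $L$, then verify that statement using the base-point-freeness of $L$. Writing $Z = p_1 + \cdots + p_d$ as a reduced divisor, the condition we must show is that for each $p_i \in Z$,
\[
h^0(C, K_C(-Z+p_i)) = h^0(C, K_C(-Z)).
\]
By Serre duality this is equivalent to the equality $h^1(C, \str_C(Z-p_i)) = h^1(C, \str_C(Z))$.

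Next, I would apply Riemann--Roch to both $\str_C(Z)$ and $\str_C(Z - p_i)$. Subtracting the two equalities gives
\[
\bigl(h^0(Z) - h^1(Z)\bigr) - \bigl(h^0(Z-p_i) - h^1(Z-p_i)\bigr) = 1,
\]
so the desired equality $h^1(Z) = h^1(Z-p_i)$ reduces to the single statement $h^0(C, \str_C(Z)) - h^0(C, \str_C(Z - p_i)) = 1$. In other words, I need to show that $p_i$ imposes an independent condition on the complete linear system $|Z| = |L|$: there exists some section of $L$ that does not vanish at $p_i$.

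This last step is where $f$ being a morphism (as opposed to a rational map) enters crucially. Because $f : C \to \P^r$ is a morphism, the pullback $L = f^*\str_{\P^r}(1)$ is globally generated: the $r+1$ coordinate sections of $\str_{\P^r}(1)$ pull back to sections of $L$ with no common zero on $C$. Hence for every point $p \in C$ there is a section of $L$ nonvanishing at $p$, which gives the inequality $h^0(L - p_i) < h^0(L)$, and therefore the equality of dimensions differing by exactly one. Chaining the reductions back yields the Cayley--Bacharach condition.

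I expect no serious obstacle: the argument is essentially a direct application of Riemann--Roch and Serre duality together with the elementary observation that pullbacks of base-point-free systems under a morphism are base-point free. The one thing to watch is that the reduction above uses $h^0(\str_C(Z)) - h^0(\str_C(Z-p_i)) \in \{0,1\}$ in general, so it is enough to exhibit one section of $L$ nonvanishing at $p_i$ rather than to analyze the full evaluation map.
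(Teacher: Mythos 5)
Your proof is correct and follows essentially the same route as the paper's: both reduce the Cayley--Bacharach condition via Riemann--Roch (with Serre duality) to the statement that each point of $Z$ imposes an independent condition on $|Z|$, which holds because $f^*\str_{\P^r}(1)$ is base-point free since $f$ is a morphism. You have merely spelled out the duality and the global-generation argument in more detail than the paper does.
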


\begin{proof}
The complete linear system $|Z|$ is base-point free, so for any $P \in Z$, $$\dim|Z - P| = \dim|Z| -1.$$ Applying Riemann-Roch, we get $$\dim|K_C -Z | = \dim|K_C-(Z-P)|,$$ which is equivalent to the Cayley-Bacharach property.
\end{proof}

\section{A generalization of the Noether-Lefschetz theorem}

In order to prove Theorem \ref{thma}, we need to show that a complete intersection curve  lies on a complete intersection surface whose Picard group is generated by the hyperplane class. More precisely, the goal of this section is to prove the following theorem.

	\begin{thm}
	\label{thm:main theorem}
	    Let $W$ be a smooth, complete intersection threefold in $\mathbb{P}^n$ with $n \geq 4$ of type $(w_4, \dots, w_n)$, and let $C$ be a smooth complete intersection curve in $W$ of type $(d - e, d, w_4, \dots, w_n)$. If $4 \leq d - e < d$, then the very general complete intersection surface $S$ containing $C$ of type $(d, w_4, \dots, w_n)$ is smooth and satisfies $\operatorname{Pic} S = \mathbb{Z} \cdot [\mathcal{O}_S(1)]$.
	\end{thm}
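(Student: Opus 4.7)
My approach is to prove this as a genuine generalization of the classical Noether--Lefschetz theorem, working with the parameter space $U := |\mathcal{I}_{C/W}(d)|^{\mathrm{sm}}$ of smooth degree-$d$ divisors on $W$ containing $C$. For smoothness of the very general $S \in U$, the linear system $|\mathcal{I}_{C/W}(d)|$ is non-empty (it contains products $F \cdot G$ where $F$ is the degree-$(d-e)$ equation of $C$ in $W$ and $G$ is any degree-$e$ form), and the positivity coming from $d - e \geq 4$ should force its base locus on $W$ to be exactly $C$. Bertini then gives smoothness of the general member away from $C$, while smoothness along $C$ follows from smoothness of $C$ in $W$ and the freedom to deform the degree-$d$ equation transversally to $C$.

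For the Picard statement, $\Pic(W) = \Z \cdot [H]$ by the Lefschetz hyperplane theorem for complete intersections (using $n \geq 4$), and $[C] = (d-e)\,[H|_S]$ in $\Pic(S)$ since $C$ is cut out on $S$ by a single degree-$(d-e)$ hypersurface. Hence the desired conclusion is equivalent to the statement that a very general $S \in U$ carries no Hodge classes outside $\Q \cdot [H|_S]$. To prove this, I would deploy a degeneration and monodromy argument. Consider the reducible member $S_0 := (W \cap \{F = 0\}) \cup (W \cap \{G = 0\}) \in U$, where $F$ is as above and $G$ is a general degree-$e$ hypersurface: the two components are smooth complete intersection surfaces meeting transversally along $C$. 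The first component has type $(d-e, w_4, \ldots, w_n)$ in $\P^n$ with leading degree $d - e \geq 4$, so the complete-intersection version of Noether--Lefschetz (going back to Moishezon) tells us that its Picard group is very generically $\Z \cdot [H]$. Passing to a Lefschetz pencil in $U$ through $S_0$ and analyzing the vanishing cycles, I would transport this information to the nearby smooth fiber, using Deligne's invariant cycle theorem to identify the monodromy-invariant part of the cohomology with classes pulled back from the limit.

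The main obstacle is making the vanishing-cycle step go through for the restricted family $U \subset |\mathcal{O}_W(d)|$, rather than for the full linear system: since $C$ is held fixed throughout $U$, its class is \emph{a priori} monodromy-invariant, and one must show that the vanishing cycles arising from the degeneration to $S_0$ still span a large enough subspace of the primitive cohomology of a nearby smooth $S$ to eliminate every hypothetical extra Hodge class. The hypotheses $4 \leq d - e < d$ enter precisely here: $d - e \geq 4$ lets us apply classical Noether--Lefschetz to the first component of $S_0$, while $d - e < d$ ensures $|\mathcal{I}_{C/W}(d)|$ has positive-dimensional moduli beyond the direction spanned by $F$, so that a Lefschetz pencil with $S_0$ as a singular fiber actually exists and has the smoothings needed to generate nontrivial monodromy.
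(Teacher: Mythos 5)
Your degeneration $S \rightsquigarrow S_0 = T \cup P$, with $T = W \cap \{F = 0\}$ of type $(d-e, w_4, \dots, w_n)$ and $P = W \cap \{G = 0\}$ of type $(e, w_4, \dots, w_n)$, is exactly the one the paper uses, and your Bertini argument for smoothness is fine. But the step you lean on to control the central fiber fails: you assert that the complete-intersection Noether--Lefschetz theorem makes the Picard group of the first component ``very generically'' equal to $\Z \cdot [H]$. The component $T$ is the \emph{unique} surface of type $(d-e, w_4, \dots, w_n)$ containing $C$ --- it is rigidly determined by $C$, so there is no generic member to which a ``very general'' statement can be applied, and $T$ may well be singular (the paper notes that the case of smooth $T$ is already a special case of Lopez's Theorem III.2.1, and that handling singular $T$ is the entire point of its Section 3). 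The only component that can legitimately be taken very general is $P$, and that is where the classical theorem is applied (Proposition~\ref{lem:crucial lemma}(i)). For $T$ the paper proves something weaker but sufficient: not that $\Pic T = \Z$, but that the restriction $r_1 \colon \Pic T \to \Pic D$ to the double curve $D = P \cap T$ is \emph{injective} (Proposition~\ref{lem:crucial lemma}(iv)). That is where the real work lies: one produces a pencil of irreducible curves in $|\mathcal{O}_T(e)|$ and shows via cohomology and base change that a line bundle trivial on the general member is trivial, with a separate analysis (Lemma~\ref{lem:cone lemma}) showing that the exceptional case of a surface ruled by lines forces $T$ to be a cone, for which $\Pic T \hookrightarrow \operatorname{Cl} T \cong \Pic D$. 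Combined with the fibered-product description \eqref{fibered product} of the Picard group of the semistable central fiber and a monodromy computation of $\Im r_1 \cap \Im r_2$, this pins down $\Pic \tilde X_0$ (Corollary~\ref{cor:picard group of central fiber}) without ever knowing $\Pic T$ itself.

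Separately, the issue you flag as ``the main obstacle'' --- that $[C]$ is monodromy-invariant in the restricted family $U$, so one must show the vanishing cycles still exclude every extra Hodge class --- is indeed the crux of Noether--Lefschetz with base locus, but your proposal stops at naming it; no argument is offered for why the invariant part of the cohomology is no larger than expected, and for the restricted family this is precisely what one cannot get for free from Lefschetz pencils and the invariant cycle theorem. The paper sidesteps a direct vanishing-cycle analysis: once $\Pic \tilde X_0$ is computed, the passage to the very general nearby fiber is the specialization argument of Griffiths--Harris, which works with the Picard group of the degenerate fiber rather than with monodromy on the smooth fibers. So both the component to which you apply Noether--Lefschetz and the mechanism for transporting the conclusion to $S$ need to be replaced.
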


	If the surface of type $(d - e, w_4, \dots, w_n)$ containing $C$ is smooth, then we recover a special case of \cite[Theorem III.2.1]{Lop}. Here we will deal with the case when it is possibly singular. 
	
\begin{rem}
\label{assumption remark}
The assumption that $n \geq 4$ is a convenience---the statement can be extended to $\mathbb{P}^3$ by augmenting Proposition \ref{lem:crucial lemma} below with \cite[Lemmas II.3.3' and II.3.3'']{Lop}. 
\end{rem}

Following Lopez's approach for proving \cite[Theorem II.3.1]{Lop}, we will restrict our attention to the main technical ingredient (Corollary \ref{cor:picard group of central fiber}) of Theorem \ref{thm:main theorem} and refer to \cite{GH3} for the remainder of the argument, which carries through without incident. 

	Let $T$ be the unique surface of type $(d - e, w_4, \dots, w_n)$ containing $C$, and let $P$ be the very general surface of type $(e, w_4, \dots, w_n)$ not containing $C$. Let $X$ be the total space of the pencil interpolating $P \cup T$ and $S$; we will regard $X_0 = P \cup T$ as the central fiber, which is singular at the singularities of $T$ and along the double curve $D = P \cap T$.

	We may choose $P$ to meet $S$ and $T$ transversely, which leaves $X$ with ordinary double point singularities at the finite intersection 
	\[
		P \cap T \cap S = D \cap C = \{p_1, \dots, p_N\}.
	\]
	Blowing up each of the $p_i$ produces a smooth model of $X$ whose central fiber has a quadric surface $Q_i$ over each singular point. The strict transform of $T$ specifies a ruling of each $Q_i$, and blowing down along each of the specified rulings yields a family $\tilde X$ with smooth total space. Away from the central fiber, $X$ and $\tilde X$ are isomorphic, but the central fiber of $\tilde X$ is a reducible surface $\tilde X_0 = \tilde P \cup \tilde T$, where $\tilde P$ is the blowup of $P$ at each of the $p_i$, and $\tilde T \cong T$. $\tilde T$ and $\tilde P$ meet in a double curve $\tilde D \cong D$.

	Since $\tilde T$ and $\tilde P$ meet transversely, 
	\begin{equation} \label{fibered product}
		\operatorname{Pic} \tilde X_0 = \operatorname{Pic} \tilde T \times_{\operatorname{Pic} \tilde D} \operatorname{Pic} \tilde P. \tag{$\star$}
	\end{equation}
	and our goal is to compute $\operatorname{Pic} \tilde X_0$. Consider the restrictions of $\operatorname{Pic} \tilde T$ and $\operatorname{Pic} \tilde P$:
	\[
		\begin{tikzcd}[column sep=tiny]
			\operatorname{Pic} \tilde T \ar[dr, "r_1"'] & & \operatorname{Pic} \tilde P \ar[dl, "r_2"] \\
			& 	\operatorname{Pic} \tilde D
		\end{tikzcd}
	\]
	The main ingredient is the following proposition:
	\begin{prop}
	\label{lem:crucial lemma}
	    Let $E_i$ be the exceptional divisor in $\tilde P$ over $p_i$, and let $\mathcal{O}_{\tilde P}(1)$ be the pullback of the hyperplane class on $P$ to $\tilde P$. Then
	    \begin{enumerate} [(i)]
	    	\item $\operatorname{Pic} P = \mathbb{Z} \cdot [\mathcal{O}_P(1)]$
	    	\item $\operatorname{Ker} r_2 = \mathbb{Z} \cdot [\mathcal{O}_{\tilde P}(\sum E_i) \otimes \mathcal{O}_{\tilde P}(-d)]$
	    	\item $\operatorname{Im} r_1 \cap \operatorname{Im} r_2 = \mathbb{Z} \cdot [\mathcal{O}_{\tilde D}(1)]$
	    	\item $r_1$ is injective.
	    \end{enumerate}
	\end{prop}

	\begin{proof}
		By our assumptions on the degrees, (i) follows from the classical Noether-Lefschetz theorem, and (ii) follows from (i). Moreover, (iii) follows from a standard monodromy argument which is given in  \cite[Lemma II.3.3 and Subclaim II.3.4]{Lop}, and it remains to show (iv). 

	    First, note that the singularities of $T$ are isolated, since $C$ is smooth and moves in $T$. Furthermore, $T$ is a complete intersection and hence has, for instance, Cohen-Macaulay singularities, so $T$ is normal. Notationally, since $\tilde T \cong T$ and $\tilde D \cong D$, we will work with $T$ and $D$ for simplicity.

		 Lopez shows in \cite[Lemma II.2.4]{Lop} that unless $e = 1$ and $T$ is either ruled by lines, the Veronese surface, or its general projection to $\mathbb{P}^4$ or $\mathbb{P}^3$, then there exists a pencil $|\mathscr{V}|$ of irreducible curves within $|\mathcal{O}_T(e)|$. If $T$ is the Veronese surface or its general projection to $\mathbb{P}^4$, $r_1$ is clearly injective. The general projection of the Veronese surface to $\mathbb{P}^3$ (the Steiner surface) is not normal---see \cite[pg. 632]{GH2}---and we have excluded $n = 3$.

		Therefore, if we assume that $T$ is not ruled, then we may assume that $T$ possesses a pencil $|\mathscr{V}|$ of irreducible curves within $|\mathcal{O}_T(e)|$. Let $f:T' \to \mathbb{P}^1$ be a resolution of $|\mathscr{V}|$:
	    \[
	    	\begin{tikzcd}[column sep=small]
	    		& T' \ar[dl, "\epsilon"'] \ar[dr, "f"] \\
	    		T \ar[rr, dotted] & & \mathbb{P}^1
	    	\end{tikzcd}
	    \]
	    Let $L$ be an element of $\operatorname{Pic} T$, and assume that $L$ restricts trivially to a general member of $|\mathscr{V}|$. Then $\epsilon^* L$ restricts trivially to every fiber of $f$ over an open subscheme $U$ of $\mathbb{P}^1$, and cohomology and base change implies that $\epsilon^* L|_{f^{-1}(U)}$ is actually the pullback of an invertible sheaf on $\mathbb{P}^1|_U$. Since there are finitely many fibers in the complement of $f^{-1}(U)$, all of which are irreducible, $\epsilon^* L$ is globally the pullback of a line bundle $\mathcal{O}_{\mathbb{P}^1}(\ell)$ on $\mathbb{P}^1$. 

	    But $f^* \mathcal{O}_{\mathbb{P}^1}(\ell) \cong \ell(eH - E)$, where $E$ is supported on the exceptional divisor $\operatorname{Exc}(\epsilon)$ of $\epsilon$. On the complement of $\operatorname{Exc}(\epsilon)$, $\epsilon^* L \cong L \cong  \ell eH$, and since $T$ is normal, the isomorphism $L \cong \ell e H$ extends across all of $T$. It follows that $L$ is trivial, and the restriction map $\operatorname{Pic} T \to \operatorname{Pic} C$ is injective for the very general curve $C$ in $|\mathscr{V}|$.  

	    Next, assume that $T$ is ruled by lines and $e = 1$. If $T$ is smooth, then it cannot be ruled---it would be of general type. So $T$ possesses some singularities, and the upshot is that $T$ must be a cone:

	    \begin{lem}
	    \label{lem:cone lemma}
	        Let $T$ be a singular, normal, irreducible surface in $\mathbb{P}^n$ which is ruled by lines. Then $T$ is a cone over a smooth, degenerate curve. 
	    \end{lem}

	    \begin{proof}
	        Let $Z$ be a connected component of the curve in $\mathbb{G}(1, n)$ which sweeps out $T$, and let $Z'$ be its normalization. Note that $Z$ is irreducible, as otherwise $T$ would be singular along a line, and likewise the normalization $Z' \to Z$ is bijective. The universal line $\Phi \to Z$ pulls back to a family of lines $\Phi' \to Z'$, and there is a natural map $\pi: \Phi' \to T$. 

	        First, we claim that $\pi$ is birational. Assume, for the sake of contradiction, that $\deg \pi \geq 2$, and let $\Lambda$ be the line corresponding to an arbitrary point on $Z$. For every point $p$ along $\Lambda$, there is an additional line on $T$ meeting $\Lambda$ at $\pi$, and since $Z$ is irreducible, it follows that every line which comes from $Z$ meets $\Lambda$. Applying the same argument to three distinct $\Lambda$ yields that $T$ is a plane, which contradicts various of the hypotheses on $T$.

	        Second, we claim that $\pi$ contracts a curve. This follows from the birationality of $\pi$, the normality of $T$, and Zariski's Main Theorem, as well as the fact that $\pi$ cannot be an isomorphism. Let $E$ denote a curve contracted by $\pi$. By definition of $\Phi'$, $E$ cannot be supported on the fibers of $\Phi' \to Z'$, so $E$ must meet every fiber. Then $\pi(E)$ lies on every line which comes from $Z$, and by taking a general hyperplane $H \cap T$ section of $T$, we see that $T$ may be regarded as the cone over $H \cap T$ with vertex $\pi(E)$.
	    \end{proof}

		To conclude the proof of Proposition \ref{lem:crucial lemma}, we make the following observations:
		\begin{enumerate} [(i)]
			\item $\Pic(T) = \operatorname{Pic}(T - \pi(E))$, since $T$ is normal, so to show that $r_1$ is injective, it suffices to show that $\operatorname{Pic}(T - \pi(E)) \to \operatorname{Pic}(D)$ is injective.  
			\item Let $f:D \to T \cap H$ be the projection away from $\pi(E)$. Since the $f$ is finite of degree $d_P = \deg P$, there is a norm map $\mathcal{N}_{f}:\operatorname{Pic}(D) \to \operatorname{Pic}(T \cap H)$, which satisfies the property that the composition
			\[
			\begin{tikzcd}
				\operatorname{Pic}(T \cap H) \ar[r, "f^{*}"] &  \operatorname{Pic}(D)  \ar[r, "\mathcal{N}_{f}"] & \operatorname{Pic}(T \cap H)
			\end{tikzcd}
			\]
			is given by $L \mapsto L^{\otimes d_P}$ for any $L \in \operatorname{Pic}(T \cap H)$ (cf. \cite[\href{https://stacks.math.columbia.edu/tag/0BCX}{Tag 0BCX}]{stacks-project}). This implies that any element of $\Ker(f^*)$ is $d_P$-torsion in $\operatorname{Pic}(T \cap H)$.
			\item $\operatorname{Pic}(T)$ is torsion-free, e.g. by \cite[Theorem, pg. 170]{Bad}.
		\end{enumerate}
		Putting things together, let $f':T \to T \cap H$ denote the projection, which has a natural section given by the inclusion $T \cap H \to T$ and gives rise to a commutative diagram
		\[
			\begin{tikzcd}
				\operatorname{Pic}(T) = \operatorname{Pic}(T - \pi(E)) \ar[d, hook] \ar[r] & \operatorname{Pic}(D) \\
				\operatorname{Cl}(T) \ar[d, "\sim"] \\
				\operatorname{Pic}(T \cap H) \ar[uur, "f^*"] \ar[uu, bend left = 40, "(f')^*"]
			\end{tikzcd}
		\]
		We refer to  \cite[Exercise II.6.3]{Har2} for the isomorphism $\operatorname{Cl}(T) \cong \operatorname{Pic}(T \cap H)$. The map $\operatorname{Pic}(T) \to \operatorname{Cl}(T)$ is injective since $T$ is normal. 

		To conclude, given $L \in \operatorname{Pic}(T)$, the pullback of $L$ to $\Pic(T \cap H)$ is torsion-free (since $L$ is torsion-free, by (iii), and the pullback is injective), so following the diagram we see that the image of $L$ in $\operatorname{Pic}(D)$ is nonzero, by (ii). 
	    \end{proof}

		\begin{cor}	
		\label{cor:picard group of central fiber}
		    $\operatorname{Pic} \tilde X_0 = \mathbb{Z} \cdot [\mathcal{O}_{\tilde X_0}(1)] \oplus \mathbb{Z} \cdot [M]$, where 
		\begin{align*}
			M|_{\tilde T} &=  \mathcal{O}_{\tilde T} \\
			 M|_{\tilde P} &= \mathcal{O}_{\tilde P}(e) \otimes \mathcal{O}_{\tilde P}(\tilde D))
		\end{align*}
		\end{cor}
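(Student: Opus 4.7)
The plan is to leverage the fibered product description $(\star)$ together with the four parts of Proposition~\ref{lem:crucial lemma}. Projecting onto the first factor of the fibered product yields the short exact sequence
\[
0 \to \ker r_2 \to \operatorname{Pic} \tilde X_0 \to r_1^{-1}(\operatorname{Im} r_2) \to 0,
\]
and the injectivity of $r_1$ from (iv) identifies $r_1^{-1}(\operatorname{Im} r_2)$ with $\operatorname{Im} r_1 \cap \operatorname{Im} r_2$, which by (iii) equals $\mathbb{Z} \cdot [\mathcal{O}_{\tilde D}(1)]$.

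I would then split this sequence using the hyperplane class. The pair $(\mathcal{O}_{\tilde T}(1), \mathcal{O}_{\tilde P}(1))$ satisfies the fibered product compatibility and hence defines a class $\mathcal{O}_{\tilde X_0}(1) \in \operatorname{Pic} \tilde X_0$ whose image in the quotient is precisely a generator. Therefore
\[
\operatorname{Pic} \tilde X_0 \;\cong\; \mathbb{Z} \cdot [\mathcal{O}_{\tilde X_0}(1)] \;\oplus\; \ker r_2.
\]

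The generator $M$ of $\ker r_2$ corresponds to a pair $(L_T, L_P)$ in the fibered product with $r_2(L_P) = 0$ and $r_1(L_T) = 0$; by (iv) the latter forces $L_T = \mathcal{O}_{\tilde T}$, giving the asserted $M|_{\tilde T} = \mathcal{O}_{\tilde T}$. By (ii), we may take $L_P = \mathcal{O}_{\tilde P}(dH - \sum_i E_i)$, where each $E_i$ is the exceptional divisor over $p_i$. To rewrite this in the form stated in the corollary, I would use that $T$ is cut out on $W$ by a form of degree $d - e$, so $D = T \cap P \in |\mathcal{O}_P(d - e)|$; since the $p_i$ lie on $D$, the proper transform on $\tilde P$ satisfies $\tilde D = (d-e) H - \sum_i E_i$, and therefore $dH - \sum_i E_i = eH + \tilde D$, matching the claimed formula for $M|_{\tilde P}$. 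The main obstacle is really just bookkeeping, as Proposition~\ref{lem:crucial lemma} has already done the substantive work.
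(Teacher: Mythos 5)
Your derivation is correct and is exactly the intended one: the paper offers no separate proof of the corollary beyond noting that it follows from the fibered-product description $(\star)$ and Proposition \ref{lem:crucial lemma}, and your splitting of $\operatorname{Pic}\tilde X_0$ via the hyperplane class, identification of the complement with $\Ker r_2$, and the divisor-class computation $dH-\sum_i E_i = eH+\tilde D$ on $\tilde P$ fill in precisely the bookkeeping the authors leave implicit. No gaps.
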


		\begin{proof}
		    By the discussion preceding Proposition \ref{lem:crucial lemma}, we know that $\operatorname{Pic} \tilde X_0 = \operatorname{Pic}(\tilde T) \times_{\operatorname{Pic}(\tilde D)} \operatorname{Pic}(\tilde P)$. Applying the various statements of Proposition \ref{lem:crucial lemma}, we simply need to show that $M|_{\tilde P} = \mathcal{O}_{\tilde P}(\sum E_i) \otimes \mathcal{O}_{\tilde P}(-d)$. This follows from the observation that $\mathcal{O}_{\tilde P}(\tilde D + \sum E_i) = \mathcal{O}_{\tilde P}(d - e)$. 
		\end{proof}

		Note that by our description of the Picard group in \eqref{fibered product}, it suffices to specify $M$ by specifying its restrictions to the components of $\tilde X_0$. The remainder of the proof of Theorem \ref{thm:main theorem} follows the argument given in \cite[pg. 37-39]{GH3}, which carries through in this setting without revision. 



\section{Proofs of the main theorems}

In this section, we first prove Theorem \ref{thmc} and then use it to prove Theorem \ref{thma}. The structure of the proof of Theorem \ref{thmc} is as follows: we take a surface of Picard rank one containing the curve, and apply the following theorem of Griffiths and Harris to construct a vector bundle on the surface.
\begin{thm}{{{\cite[Proposition 1.33]{GH}}}}
	\label{thm:GH}
Let $S$ be a smooth projective surface, $L$ a line bundle on $S$, and $Z \subset S$ a reduced set of points. Then there exists a rank two vector bundle $\E$ with $\det \E = L$ along with a section $s \in H^0(\E)$ with $Z(s)=Z$ if and only if $Z$ satisfies the Cayley-Bacharach property with respect to $|K_S+L|$.
\end{thm}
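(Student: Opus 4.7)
The plan is to apply the Serre construction, which converts rank two vector bundles equipped with a distinguished section into extension classes. If $\E$ is rank two with $\det \E = L$ and $s \in H^0(\E)$ has zero locus exactly the reduced set $Z$, then the Koszul complex on $s$ produces the short exact sequence
$$0 \to \str_S \xrightarrow{s} \E \to \I_Z \otimes L \to 0,$$
so the pair $(\E, s)$ (with $s$ determined up to scalar) is classified by an element of $\Ext^1_S(\I_Z \otimes L, \str_S)$. Conversely, every such class produces a coherent sheaf $\E$ with a section vanishing scheme-theoretically on $Z$, and the entire content of the theorem is to characterize exactly which classes yield a locally free $\E$.

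The natural tool is the local-to-global spectral sequence for $\Ext$, whose low-degree exact sequence contains
$$\phi: \Ext^1(\I_Z \otimes L, \str_S) \to H^0(\mathcal{E}xt^1(\I_Z \otimes L, \str_S)) \cong \bigoplus_{p \in Z} k(p),$$
where the identification uses the local Koszul resolution of $\I_p$ at each $p$. A standard deformation-theoretic argument shows that $\E$ is locally free at $p$ if and only if the $p$-component $\alpha_p$ of $\phi$ applied to the extension class is nonzero, so the question reduces to whether $\Im \phi$ contains some $(\alpha_p)_{p \in Z}$ with every $\alpha_p \neq 0$. The cokernel of $\phi$ embeds into $H^2(L^{-1}) \cong H^0(K_S+L)^\vee$ via the next spectral-sequence map $\psi$, and a direct calculation using the connecting homomorphism of the sequence $0 \to \I_Z \otimes L \to L \to L|_Z \to 0$ together with Serre duality identifies $\psi$ with the transpose of the evaluation map $\mathrm{ev}: H^0(K_S+L) \to \bigoplus_p k(p)$.

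Consequently $\Im \phi = V^\perp \subset \bigoplus_p k(p)$, where $V := \Im(\mathrm{ev})$, and the theorem reduces to the linear-algebra statement: $V^\perp$ contains a vector with no zero coordinate if and only if $V$ contains no nonzero multiple of any standard basis vector $e_p$. The containment $ce_p \in V$ with $c \neq 0$ says exactly that some section of $K_S+L$ vanishes on $Z \setminus \{p\}$ but not at $p$, so the absence of such vectors is precisely the Cayley-Bacharach property with respect to $|K_S+L|$. The forward implication in the linear-algebra statement is immediate; the reverse follows because a proper subspace of $\C^{|Z|}$ cannot be covered by finitely many coordinate hyperplanes, so if $V^\perp$ consisted entirely of vectors with some zero coordinate it would be forced into a single hyperplane $\{v_p = 0\}$, giving $e_p \in V$. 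The main technical obstacle is the Serre-duality bookkeeping identifying $\psi$ with $\mathrm{ev}^T$; once that is carefully set up, both directions of the theorem fall out simultaneously.
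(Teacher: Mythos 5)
The paper offers no proof of this statement---it is quoted from Griffiths--Harris \cite{GH} and used as a black box---so there is nothing internal to compare against; what you have written is a reconstruction of the classical argument, and it is correct. Your route is the standard one: the Koszul/Serre correspondence identifies pairs $(\E,s)$ with classes in $\Ext^1(\I_Z\otimes L,\str_S)$, the local-to-global $\Ext$ sequence gives $\Im\phi=\ker\psi=V^\perp$ with $V=\Im\bigl(H^0(K_S+L)\to\bigoplus_{p\in Z}k(p)\bigr)$, and the theorem reduces to the observation that $V^\perp$ meets the complement of the coordinate hyperplanes iff $V$ contains no nonzero multiple of any $e_p$, which is exactly Cayley--Bacharach; your use of the fact that a $\C$-vector space is not a finite union of proper subspaces to get the reverse implication is the right (and necessary) step. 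The one ingredient you defer to a ``standard deformation-theoretic argument''---that the extension sheaf is locally free at $p$ iff the $p$-component of $\phi(\eta)$ is nonzero---is genuinely the technical heart of the theorem; it is precisely the reduced case of the paper's Theorem \ref{genserre} (Lazarsfeld, Prop.\ 3.9), so it deserves at least a citation rather than a wave, but invoking it is legitimate. Two cosmetic points: the identifications $H^0(\mathcal{E}xt^1(\I_Z\otimes L,\str_S))\cong\bigoplus_p k(p)$ and its dual with $\bigoplus_p k(p)$ are non-canonical, but since the pairing is $\str_S$-linear and hence localizes point by point, ``the $p$-coordinate vanishes'' is well defined and your perpendicularity argument goes through; and the Serre-duality bookkeeping identifying $\psi$ with $\mathrm{ev}^{T}$ is, as you say, the only place where care is needed, and your description of it via the sequence $0\to\I_Z\otimes L\to L\to L|_Z\to 0$ is the standard correct one.
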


\noindent We then determine that the vector bundle is Bogomolov unstable, which gives a lower bound on the degree of the fiber and forces the fiber to be contained in a hyperplane. Analyzing the geometry, we conclude that each hyperplane must contain a single fiber, and that the hyperplanes lie in a linear pencil.

Combining the Griffiths-Harris theorem with Theorem \ref{CBcurves}, we easily obtain the following.

\begin{lem}
	\label{lem:CB}
Let $C$ be a smooth curve satisfying the assumptions of Theorem \ref{thmc}. Let $f: C \to \P^r$ be a morphism, and let $\Gamma \in |f^*\str_{\P^r}(1)|$ be general (and thus reduced). Then there is a rank two vector bundle $\E$ on $S$ sitting in the short exact sequence 
	\begin{equation}
	\label{SES1} 
	0 \to \str_S \to \E \to \mathcal{I}_{\Gamma, S}(\alpha) \to 0.
	\end{equation}
\end{lem}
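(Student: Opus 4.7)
The plan is to apply the Griffiths--Harris criterion (Theorem \ref{thm:GH}) directly with $Z = \Gamma$ and line bundle $L = \str_S(\alpha)$. Two hypotheses must be checked: that $\Gamma$ is a reduced zero-dimensional subscheme of $S$, and that $\Gamma$ satisfies the Cayley--Bacharach condition with respect to $|K_S + \str_S(\alpha)|$. The first is automatic because $\Gamma$ was chosen to be reduced on $C$ and the points of $\Gamma$ remain distinct inside the smooth surface $S$; only the Cayley--Bacharach condition requires any argument.

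The main step is to promote the Cayley--Bacharach condition from $C$ to $S$ via adjunction. Since $C \in |\str_S(\alpha)|$, the adjunction formula gives
\[
K_C = (K_S + C)|_C = (K_S + \str_S(\alpha))|_C.
\]
Theorem \ref{CBcurves} applied to $f: C \to \P^r$ tells us that $\Gamma$ satisfies the Cayley--Bacharach condition on $C$ with respect to $|K_C|$. Suppose now that $\sigma \in H^0(S, K_S + \str_S(\alpha))$ vanishes at every point of $\Gamma$ save some point $p$. Then $\sigma|_C \in H^0(C, K_C)$ vanishes on $\Gamma \setminus \{p\}$, and so by Theorem \ref{CBcurves} must vanish at $p$ as well. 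Consequently $\sigma$ vanishes at every point of $\Gamma$, which is exactly the Cayley--Bacharach condition for $\Gamma$ on $S$ with respect to $|K_S + \str_S(\alpha)|$.

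With both hypotheses verified, Theorem \ref{thm:GH} produces a rank two vector bundle $\E$ on $S$ with $\det \E = \str_S(\alpha)$ and a section $s \in H^0(\E)$ whose zero locus is exactly $\Gamma$. Since $\Gamma$ has the expected codimension two in $S$, the standard Koszul-type resolution associated to a regular section of a rank two bundle yields
\[
0 \to \str_S \xrightarrow{\,\cdot s\,} \E \to \mathcal{I}_{\Gamma, S}(\alpha) \to 0,
\]
which is precisely the sequence \eqref{SES1}. The only substantive step in the argument is the one-line restriction that transfers Cayley--Bacharach from $|K_C|$ up to $|K_S + \str_S(\alpha)|$; everything else is formal, given the theorems already established.
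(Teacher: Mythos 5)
Your proposal is correct and follows essentially the same route as the paper: apply Theorem \ref{CBcurves} to get the Cayley--Bacharach condition with respect to $|K_C|$, transfer it to $|K_S + \str_S(\alpha)|$ via adjunction and restriction of sections, and then invoke Theorem \ref{thm:GH}. The paper's proof is a three-line sketch of exactly this argument; you have merely spelled out the restriction step that the paper summarizes as ``by adjunction.''
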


\begin{proof}

By Theorem \ref{CBcurves}, $\Gamma$ satisfies the Cayley-Bacharach condition with respect to  the canonical linear series $|K_C|.$ In particular,  by adjunction, it satisfies the Cayley-Bacharach condition with respect to $|K_S + \alpha H|$ where $H$ is a hyperplane section on $S$. 

We then apply Theorem \ref{thm:GH} with $L = \str_S(\alpha)$ to obtain $\E$, along with a global section $s$ that vanishes precisely along $\Gamma$. A modification of the Koszul complex associated to $\E$ and $s$ (see, for example, \cite[page 320]{Laz2}) yields the exact sequence
$$0 \longrightarrow \str_S \stackrel{\cdot s}{\longrightarrow} \E \longrightarrow \wedge^2 \E \longrightarrow \wedge^2 \E \otimes \str_{\Gamma}\longrightarrow 0.$$
Since $s$ vanishes along $\Gamma$ and $\wedge^2 \E \cong \str_S(\alpha)$, the cokernel of the map $\E \to \wedge^2 \E$ is $\mathcal{I}_{\Gamma, S}(\alpha)$, which gives the desired short exact sequence.

\end{proof}

Using \eqref{SES1}, we can check that $\E$ is Bogomolov unstable, producing a second representation of $\E$ as extension. The plan is to compare the two.  We first recall Bogomolov's Instability Theorem.

\begin{thm}[cf. {{{\cite[Corollary 2 in Section 10.12]{Bog}}}}]
	\label{thm:Bog}
Let $\mathcal{F}$ be a rank two vector bundle on a smooth projective surface $X$. If 
	\begin{equation}
	\label{eqn:Bog}
	c_1(\mathcal{F})^2 - 4 c_2(\mathcal{F}) > 0,
	\end{equation}
then $\mathcal{F}$ is \newword{Bogomolov unstable}. That is, there exists a finite subscheme $Z \subset X$ (possibly empty), plus line bundles $L$ and $M$ on $X$ sitting in an exact sequence 
	\begin{equation} 
	\label{eqn:BogSES}
	0 \to L \to \mathcal{F} \to M \otimes \mathcal{I}_Z \to 0
	\end{equation}
where $(L-M)^2>0$ and $(L-M)A >0$ for all ample divisors $A$.
\end{thm}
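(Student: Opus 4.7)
The plan is to produce the destabilizing sub-line bundle from an abundance of sections in a suitable symmetric power of $\mathcal{F}$, using Riemann-Roch to generate the sections and a geometric argument on the projectivization $\P(\mathcal{F})$ to extract the sub-line bundle.

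First I would normalize. Since tensoring $\mathcal{F}$ with a line bundle, or formally with a $\Q$-line bundle after passing to an even symmetric power, preserves both $\Delta(\mathcal{F}) = c_1(\mathcal{F})^2 - 4 c_2(\mathcal{F})$ and the shape of the sequence \eqref{eqn:BogSES}, I may assume $c_1(\mathcal{F}) = 0$ (at least formally). Then a direct Chern-class calculation for symmetric powers, combined with Hirzebruch-Riemann-Roch, gives
\[
\chi(S^n \mathcal{F}) = \frac{n^3}{24}\, \Delta(\mathcal{F}) + O(n^2),
\]
which is a positive cubic in $n$ by hypothesis. Hence either $h^0(S^n \mathcal{F})$ or $h^2(S^n \mathcal{F})$ grows like $n^3$; in the latter case, Serre duality combined with the rank-two identity $\mathcal{F}^{\vee} \cong \mathcal{F} \otimes (\det \mathcal{F})^{-1}$ produces nonzero sections of a twist $S^n \mathcal{F} \otimes L_0$ for some line bundle $L_0$.

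The main obstacle is converting a nonzero section $s \in H^0(S^n \mathcal{F} \otimes L_0)$ into a destabilizing sub-line bundle of $\mathcal{F}$ itself. I would pass to the projectivization $\pi: Y = \P(\mathcal{F}) \to X$ with tautological class $\xi$, so that $s$ corresponds to an effective divisor $D \in |n\xi + \pi^* L_0|$ on the threefold $Y$. An irreducible component $D_0 \subset D$ whose fibers over $X$ have minimal relative degree $k$ (with $1 \leq k \leq n$) yields, after pushing forward to $X$, a sub-line bundle of a twist of $S^k \mathcal{F}$; for $k = 1$ this is already the desired sub-line bundle $L \hookrightarrow \mathcal{F}$, while for $k > 1$ one iterates with $S^k \mathcal{F}$ in place of $\mathcal{F}$, or equivalently extracts the sub-line bundle directly from the image of the rational section $X \dashrightarrow Y$ cut out by $D_0$. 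Saturating inside $\mathcal{F}$ then produces $L$ and a quotient $M \otimes \mathcal{I}_Z$ as in \eqref{eqn:BogSES}.

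Finally, I would verify the two numerical assertions. From the sequence \eqref{eqn:BogSES} one reads off $c_1(\mathcal{F}) = L + M$ and $c_2(\mathcal{F}) = L \cdot M + \deg Z$, whence $(L - M)^2 = \Delta(\mathcal{F}) + 4 \deg Z > 0$. For the positivity $(L - M) \cdot A > 0$ against every ample divisor $A$, I would choose $L$ to be maximal among destabilizing sub-line bundles with respect to a fixed polarization $H$, which guarantees $(L - M) \cdot H > 0$; the Hodge index theorem, combined with $(L - M)^2 > 0$, then propagates this positivity from $H$ to every ample class.
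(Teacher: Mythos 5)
This statement is Bogomolov's instability theorem, which the paper quotes from \cite{Bog} without proof, so there is no internal argument to compare against; I can only assess your sketch as a free-standing proof of a known hard theorem. The two ends of your outline are sound: the normalization, the Chern-class computation giving $\chi(S^n\mathcal{F}) = \tfrac{n^3}{24}\Delta(\mathcal{F}) + O(n^2)$, the use of Serre duality and the self-duality $\mathcal{F}^{\vee}\cong \mathcal{F}\otimes(\det\mathcal{F})^{-1}$ to get cubic growth of $h^0(S^n\mathcal{F}\otimes L_0)$ for a fixed twist $L_0$ (along a subsequence), and, at the other end, the verification that $(L-M)^2=\Delta(\mathcal{F})+4\deg Z>0$ together with the Hodge-index argument propagating $(L-M)\cdot H>0$ to $(L-M)\cdot A>0$ for every ample $A$. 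That last paragraph is exactly how one upgrades instability for one polarization to the uniform statement in the theorem.

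The genuine gap is the middle step, which is where the entire difficulty of the theorem lives. An irreducible component $D_0$ of $D\in|n\xi+\pi^*L_0|$ of relative degree $k\geq 2$ over $X$ is a multisection of $\P(\mathcal{F})\to X$, not a rational section, so it does not ``cut out'' a map $X\dashrightarrow Y$ and yields only a sub-line bundle of (a twist of) $S^k\mathcal{F}$, not of $\mathcal{F}$. Your proposed fallback of ``iterating with $S^k\mathcal{F}$ in place of $\mathcal{F}$'' does not close the loop: a destabilizing sub-line bundle of $S^k\mathcal{F}$ does not, by any formal argument, produce a destabilizing sub-line bundle of $\mathcal{F}$ (indeed, semistability of $\mathcal{F}$ implies semistability of $S^k\mathcal{F}$ in characteristic zero, but that implication is itself a nontrivial theorem and runs in the direction opposite to what you need). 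The published proofs supply precisely this missing ingredient by other means: Bogomolov's original argument uses the GIT theory of unstable points in the $GL_2$-representation $S^{2n}V\otimes(\det V)^{-n}$ to produce a canonical parabolic reduction, i.e.\ a canonically defined sub-line bundle of $\mathcal{F}$; other treatments (e.g.\ via Mehta--Ramanathan or Flenner restriction theorems) instead bound $h^0$ of the normalized symmetric powers of a semistable bundle by $O(n^2)$ after restricting to curves, contradicting the cubic growth. Without one of these inputs, the passage from ``many sections of $S^n\mathcal{F}\otimes L_0$'' to ``a sub-line bundle $L\hookrightarrow\mathcal{F}$ with $L\cdot H>\tfrac12 c_1(\mathcal{F})\cdot H$'' is unproved, and the proposal as written does not establish the theorem.
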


We can now use this along with \ref{SES1} to prove our key lemma. The technique of the proof is similar to that of Reider's Theorem (cf. \cite[Theorem 2.1]{Laz}  or \cite{Rei} for Reider's original proof).

\begin{lem}
	\label{lem:key lemma}
Let $C$ be a smooth curve satisfying the assumptions of Theorem \ref{thmc}. Let $f:C \to \P^r$ be a morphism with $r<n$. Suppose $\deg f^*\str_{\P^r}(1) < d_C := \deg(C)$, and let $\Gamma \in |f^*\str_{\P^r}(1)|$ be a divisor. Then 
	\begin{enumerate}
\item $\Gamma$ lies in a hyperplane, and 
\item $\deg f^*\str_{\P^r}(1) \geq \deg(S) \cdot (\alpha-1)$.
	\end{enumerate}
\end{lem}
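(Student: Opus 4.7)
The plan is to compare two short exact sequences for the rank-two vector bundle $\E$ supplied by Lemma \ref{lem:CB}: the defining sequence \eqref{SES1} and a Bogomolov-destabilizing sequence produced by Theorem \ref{thm:Bog}. Because $\Pic(S) = \Z \cdot H$, the destabilizing line bundles are integer multiples of $H$, and the interaction between the two sequences is controlled entirely by arithmetic.

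First I would read off from \eqref{SES1} the Chern classes $c_1(\E) = \alpha H$ and $c_2(\E) = \deg \Gamma$, so that Bogomolov's inequality becomes $\alpha^2 \deg(S) - 4 \deg \Gamma > 0$. This follows from the hypothesis $\deg \Gamma < \deg C = \alpha \deg(S)$ together with $\alpha \geq 4$, which guarantees $\alpha \deg(S) \leq \alpha^2 \deg(S)/4$. Theorem \ref{thm:Bog} then supplies integers $a, b$ and a finite subscheme $Z$ with
\[
    0 \to \str_S(aH) \to \E \to \I_Z(bH) \to 0,
\]
where $a + b = \alpha$, $a > b$ (from $(L - M) \cdot H > 0$), and $\deg \Gamma = ab \deg(S) + \deg Z \geq ab \deg(S)$.

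Next I would compose $\str_S(aH) \hookrightarrow \E \twoheadrightarrow \I_\Gamma(\alpha H)$, obtaining an element of $H^0(\I_\Gamma(bH))$. If this map vanished, $\str_S(aH)$ would embed into $\str_S$, forcing $a \leq 0$, which is impossible since $a > \alpha/2 \geq 2$. Hence the composition corresponds to a nonzero section $s \in H^0(\str_S(bH))$ vanishing on $\Gamma$. Ampleness of $H$ rules out $b < 0$, and $b = 0$ would require a nonzero section of $\I_\Gamma$, contradicting $\Gamma \neq \emptyset$; so $b \geq 1$. Combining the two bounds then gives
\[
    ab \deg(S) \leq \deg \Gamma < \alpha \deg(S) = (a+b)\deg(S),
\]
so $(a-1)(b-1) < 1$. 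Together with $a > b \geq 1$ (hence $a \geq 2$), this forces $b = 1$ and $a = \alpha - 1$. Thus $s$ cuts out a hyperplane section $D \in |\str_S(H)|$ of $S$ containing $\Gamma$, proving (1), while (2) follows at once from $\deg \Gamma \geq ab \deg(S) = (\alpha - 1)\deg(S)$.

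The main subtlety I anticipate is the case analysis on $b$: eliminating $b \leq 0$ uses the ampleness of $H$ together with $\Gamma \neq \emptyset$, while eliminating $2 \leq b \leq \lfloor \alpha/2 \rfloor$ depends on the tight arithmetic inequality $ab < a+b$, which is only available because $\alpha \geq 4$ makes Bogomolov's theorem applicable in the first place. The hypothesis $\alpha \geq 4$ therefore plays a double role in the argument.
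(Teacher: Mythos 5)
Your argument is correct and is essentially the paper's own proof: the same Chern-class computation from \eqref{SES1}, the same verification of Bogomolov instability using $\deg\Gamma < \alpha\deg(S)$ and $\alpha\ge 4$, the same composite-map trick $L \to \E \to \I_{\Gamma,S}(\alpha)$ to produce a section of $\str_S(b)$ vanishing on $\Gamma$, and the same arithmetic (your $(a-1)(b-1)<1$ is the paper's $0<\alpha-\lambda<2$). The only point worth making explicit is that one should first replace $\Gamma$ by a general (hence reduced) member of $|f^*\str_{\P^r}(1)|$ so that Lemma \ref{lem:CB} actually applies, and then note that the locus of divisors lying in a hyperplane is closed in the linear system, which is how the paper passes from the general divisor back to an arbitrary one.
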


\begin{proof}
Without loss of generality, we can take $\Gamma$ to be general in $|f^*\str_{\P^r}(1)|$. Let $\E$ be the vector bundle on $S$ obtained in Lemma \ref{lem:CB}. First, we show that $\E$ is Bogomolov unstable. By \eqref{SES1}, the Chern classes of $\E$ are given by 
		\begin{align*}
		 	c_1(\E) &= \alpha H, \\
			c_2(\E) &= d_{\Gamma},
		\end{align*} 
where $d_{\Gamma}$ is the length of $\Gamma$, i.e. $\deg f^*\str_{\P^r}(1)$.
		Let $d_S$ be the degree of $S$. Then 
		\[
			 c_1(\mathcal{E})^2 - 4 c_2(\mathcal{E}) = \alpha^2 d_S - 4 d_{\Gamma},
		\]
which greater than zero since $d_{\Gamma} < d_C = d_S \alpha$, and $\alpha \geq 4$. Thus, 
$\E$ sits in the short exact sequence 
	\begin{equation}
	\label{SES2}
	0 \to L \to \E \to M \otimes \mathcal{I}_Z \to 0
	\end{equation}
satisfying the conditions from Theorem \ref{thm:Bog}.

Now we show that $M$ is effective. Since $\Pic(S) = \Z$, we can write $L = \str_S (\lambda)$. By \eqref{SES1} and \eqref{SES2},
	$$c_1(\E) = c_1(L) + c_1(M) = c_1(\alpha H).$$
Thus, $M$ and $\alpha H-L$ are linearly equivalent. By the instability of $\E$, we have 
	$$(2L - \alpha H)\cdot H = (2\lambda  - \alpha) d_S > 0.$$
So 
	\begin{equation}\label{lambda}
2\lambda > \alpha \geq 4.
	\end{equation}
In particular, $\lambda$ is positive.
Thus, the composite map 
	$$ L \to \E \to \mathcal{I}_{\Gamma, S} \otimes \str_S(\alpha)$$ is nonzero, as otherwise $L$ would map to the kernel, $\str_S$, of the right-hand map.
Twisting down by $\lambda$, we obtain a nonzero map $$\str_S \to  \mathcal{I}_{\Gamma, S} \otimes \str_S(\alpha -\lambda) = \mathcal{I}_{\Gamma, S} \otimes M.$$ This implies that
	$$h^0(M) \geq h^0(\mathcal{I}_{\Gamma, S} \otimes M) > 0.$$
Therefore, there is an effective curve $$C_0 \in |M|$$ which contains $\Gamma$. Also, $\alpha > \lambda$ (since $\Gamma$ is nonempty).

Now we approximate the intersection pairing $L\cdot M$. 
Let $d_Z$ denote the length of $Z$. Then by \eqref{SES1} and \eqref{SES2}, we obtain 
$$d_\Gamma = c_2(\E) = L\cdot M + d_Z.$$
Thus $d_\Gamma \geq L\cdot M.$

Collecting inequalities, we have 
$$\deg C = \alpha d_S > d_{\Gamma} \geq L\cdot M = \lambda(\alpha - \lambda) d_S.$$
Combining this inequality with \eqref{lambda}, we get $0< \alpha - \lambda <2$. Thus, $\lambda = \alpha-1$, which proves (2). For (1), notice 
	\begin{align*}
L &= \str_S(\alpha-1)\textrm{, and}\\
M &= \str_S(1).
	\end{align*}
In particular, $\Gamma \subset C_0$ lies in a hyperplane, as desired.

\end{proof}

Now that we know a general divisor in $|f^*\str_{\P^r}(1)|$ will lie in a hyperplane, to prove Theorem \ref{thmc}, it only remains to show that the corresponding pencil of hyperplanes forms a linear pencil and that a member of the pencil contains only one fiber. First we will prove this in the case when $r =1$, and then reduce the general case to this one.

\begin{lem}
\label{thmforP1}
Let $f:C \to \P^1$ be a morphism with $\deg f < \deg C$ (again, with $C$ satisfying the assumptions of Theorem \ref{thmc}). Then $f$ is the projection from an $(n-2)$-plane in $\P^n$.
\end{lem}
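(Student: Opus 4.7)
The plan is to extract from Lemma~\ref{lem:key lemma} a linear pencil of hyperplanes in $\P^n$ whose intersections with $C$ recover the fibers of $f$, and then to identify $f$ with the linear projection from the common $(n-2)$-plane of that pencil. For general $t \in \P^1$, let $\Gamma_t := f^{-1}(t)$. By Lemma~\ref{lem:key lemma}(1), I may choose a hyperplane $H_t \subset \P^n$ containing $\Gamma_t$, and then write $H_t \cap C = \Gamma_t + R_t$. The residual $R_t$ is effective of degree $\deg C - d_\Gamma$, and part (2) of the same lemma bounds this by $d_S$.

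The crux of the argument---and the main obstacle---is to show that $R_t =: R$ is independent of $t$. Since all the $R_t$ are linearly equivalent, they define a family inside the complete linear system $|\str_C(1) - f^* \str_{\P^1}(1)|$ on $C$. Assuming for contradiction that this family is non-constant, I would take its linear span, discard any base locus, and extract from it a base-point-free pencil $g^1_\delta$ with $\delta \leq d_S$. This gives a morphism $g : C \to \P^1$ of degree $\delta \leq d_S < d_C$, and Lemma~\ref{lem:key lemma}(2) applied to $g$ would then force $\delta \geq (\alpha - 1) d_S \geq 3 d_S$, contradicting $\delta \leq d_S$. The recursive use of the lemma on the residual pencil is the key technical device.

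With $R$ fixed, the pencil $\{H_t \cap C\}_{t \in \P^1} = \{\Gamma_t + R\}$ is the translate of the linear pencil $|f^* \str_{\P^1}(1)|$ by $R$, and is therefore itself a linear pencil inside $|\str_C(1)|$. Since $C$ is non-degenerate in $\P^n$, the restriction map $(\P^n)^* \to |\str_C(1)|$ (whose kernel on sections is $H^0(\I_C(1)) = 0$) is injective, so the hyperplanes $\{H_t\}$ themselves form a linear pencil in $(\P^n)^*$, with common base locus an $(n-2)$-plane $\Lambda$.

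Finally, for distinct $t, t' \in \P^1$ the fibers $\Gamma_t$ and $\Gamma_{t'}$ are disjoint, so
\[
C \cap \Lambda \;=\; C \cap H_t \cap H_{t'} \;=\; (\Gamma_t + R) \cap (\Gamma_{t'} + R) \;=\; R
\]
as subschemes of $C$. The linear projection $\pi : \P^n \dashrightarrow \P^1$ from $\Lambda$ restricts to a morphism on the smooth curve $C$, whose fiber over $t$ is $H_t \cap C - C \cap \Lambda = (\Gamma_t + R) - R = \Gamma_t = f^{-1}(t)$. Hence $\pi|_C = f$, exhibiting $f$ as the projection from $\Lambda$.
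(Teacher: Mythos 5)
Your proof is correct, and its engine is the same as the paper's: the decisive step in both arguments is to show that the residual divisor $H_t \cap C - \Gamma_t$ is independent of $t$ by observing that two distinct linearly equivalent residuals would produce a base-point-free pencil of degree at most $d_S$, contradicting the bound $\deg g^*\str_{\P^1}(1) \geq (\alpha-1)d_S \geq 3d_S$ from Lemma~\ref{lem:key lemma}(2). Where you diverge is in the endgame. The paper first proves two auxiliary facts --- that each fiber \emph{spans} a hyperplane (via the gonality bound applied to the projection from an $(n-2)$-plane) and that no two fibers lie in a common hyperplane --- and then identifies the base locus $K = \bigcap_{H\in\Lambda} H$ by a geometric dimension count on the family of $(n-2)$-planes $H_1\cap H_2$. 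You instead observe that the divisors $\Gamma_t + R$ trace out a line in $\P H^0(\str_C(1))$ and pull this back along the linear embedding $(\P^n)^* \hookrightarrow \P H^0(\str_C(1))$ given by non-degeneracy, so the hyperplanes $H_t$ automatically form a linear pencil with base an $(n-2)$-plane $\Lambda$; the identities $C\cap\Lambda = R$ and $\pi|_C = f$ then fall out formally. This is arguably cleaner: it makes the paper's first two steps unnecessary (the non-uniqueness of $H_t$ when a fiber fails to span a hyperplane is absorbed by the constancy of the residual, and the possibility that two fibers share a hyperplane never needs separate treatment), and it replaces the dimension-count argument for $\dim K = n-2$ with pure linear algebra. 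The one point worth making explicit is that at least two of the $H_t$ are distinct (otherwise $C$ would be degenerate, since the general fibers are dense in $C$), which is what upgrades ``contained in a line of $(\P^n)^*$'' to ``equal to a line.''
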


\begin{proof}
Let $d_C = \deg(C) $ and $d_S = \deg(S)$.
Let $\Gamma \subset C$ be a fiber of $f$, and suppose it does not span a hyperplane. That is, assume $\Gamma \subset G$, where $G \subset \P^n$ is an $(n-2)$-dimensional linear space. Note that $$(\alpha -1)d_S \leq \gon(C) \leq \deg f$$ by Lemma \ref{lem:key lemma}.
Then projection from $G$ determines a morphism $C \to \P^1$ of degree at most $$d_C - \deg f \leq \alpha d_S -  (\alpha-1)d_S= d_S < (\alpha - 1) d_S \leq \gon(C),$$ which is a contradiction, since the degree cannot be smaller than the gonality. Thus, each fiber  of $f$ spans a hyperplane.

For the sake of contradiction, suppose two distinct fibers lie in $H \subset \P^n$, a hyperplane. Then by Lemma \ref{lem:key lemma}, $$ 2d_S(\alpha-1) \leq 2\deg f < d_C=  d_S \alpha. $$ But $\alpha \geq 4$. Thus, no two fibers span the same hyperplane.

Let $\Gamma_1$ and $\Gamma_2$ be fibers of $f$ contained in hyperplane sections $h_1$ and $h_2$, respectively. There is a linear equivalence between $h_1-\Gamma_1$ and $h_2 -\Gamma_2$. If $h_1 - \Gamma_1$ is not equal to $h_2 - \Gamma_2$ on the level of cycles, then, possibly removing base points, there exists a base-point free pencil on $C$ of degree $$\length(h_1-\Gamma_1) = d_C-\deg f \leq d_S,$$ but, by Lemma \ref{lem:key lemma}, such a pencil cannot exist.

Therefore, $h_1 - \Gamma_1 = h_2 - \Gamma_2$. Thus, since $\Gamma_1$ and $\Gamma_2$ are disjoint, $h_1 - \Gamma_1 = h_1 \cap h_2$. Since we chose $\Gamma_1$ and $\Gamma_2$ arbitrarily, these equalities hold for every pair of fibers.

Let $\G(n-1,n)$ be the Grassmannian of hyperplanes in $\P^n$, and let $\Lambda \subset \G(n-1,n)$ be the pencil of hyperplanes that are spanned by fibers of $f$. Set
$$K = \bigcap_{H \in \Lambda} H \subset \P^n.$$ Assume $\dim K < n-2.$ Then for each pair $H_1, H_2 \in \Lambda$, $$H_1 \cap H_2 \cap C = h_1 \cap h_2.$$ Thus, $\{H_1 \cap H_2 | H_1, H_2 \in \Lambda \}$  is a nonconstant family of $n-2$-planes, whose union in $\P^n$ has dimension at least $n-1$, and intersects $C$ in $h_1 \cap h_2$. But $\length (h_1 \cap h_2) < \length h_1 = d_C$, whereas every hypersurface intersects $C$ in at least $d_C$ points, a contradiction.

Thus, $\dim K = n - 2$, and $\Lambda$ is the unique linear pencil corresponding to the projection from $K$.

 \end{proof}
 
 \begin{proof}[Proof of Theorem \ref{thmc}]
By similar reasoning as in the beginning of the proof of Lemma \ref{thmforP1}, each divisor in the linear system $W  \subseteq H^0 (f^*\str_{\P^r}(1))$ corresponding to $f$ spans a hyperplane, and no two divisors lie in the same hyperplane. That is, there is a natural injection $$\pi: \P(W^\vee) \to \P(V^\vee),$$ where $V = H^0(\str_{\P^n}(1))$. 

It remains to show that the image of $\pi$ is a linear subvariety. Note that it suffices to show that the image of a general line $\P(W^\vee)$ is a line in $\P(V^\vee)$. A general line in $\P(W^\vee)$ is a linear subsystem of dimension one, corresponding to the composition
\begin{equation*}
\xymatrix@=20pt{ C \ar[r]^f & \Im f \ar[r]^{\text{pr}} & \P^1
}
\end{equation*} 
where pr is a projection from an $(r-2)$-plane not meeting $\Im f$. Thus the degree of $\text{pr} \circ f$ is
$$\deg[(\text{pr} \circ f)^* \str_{\P^1}(1)]  = \deg[f^*(\text{pr}^* \str_{\P^1}(1))] = \deg(f^*  \str_{\P^r}(1)).$$
So $\text{pr} \circ f$ satisfies the hypotheses of Lemma \ref{thmforP1}, which guarantees that the image of the linear system associated to $\text{pr} \circ f$ under $\pi$ is a line, as needed.
 \end{proof}
 
Corollary \ref{cord} follows quickly from Lemma \ref{lem:key lemma} (2) and a theorem of Coppens and Martens.

\begin{proof}[Proof of Corollary \ref{cord}]
The lower bound is immediate from Lemma \ref{lem:key lemma} (2).

For the upper bound, since $S$ is non-degenerate, $\deg(S) \geq n-1$. Thus, $\deg(C) \geq 4n-4$.
\cite[Theorem A]{CM} implies that if $\deg(C) \geq 4n-7$, then $C$ has a $(2n-3)$-secant $(n-2)$-plane. Projecting from such a plane yields the upper bound.
\end{proof}

Theorem \ref{thma} follows easily from Theorem \ref{thmc} and Theorem \ref{thm:main theorem}.

\begin{proof}[Proof of Theorem \ref{thma}]
If $n = 2$, the Theorem follows from Noether's Theorem on the gonality of plane curves, so we assume $n \geq 3$. Consider the linear subsystem $\mathcal{D}$ of $|\str_{\P^n}(a_{n-1})|$ consisting of sections vanishing on $C$. Since $C$ is a complete intersection, it is generated in its highest degree $a_{n-1}$, so the base locus of $\mathcal{D}$ is $C$. Thus, by the strong Bertini Theorem (see e.g. \cite[Proposition 5.6]{EH}), a general member of $\mathcal{D}$ is smooth. Choosing such a hypersurface, and proceeding by induction, we can find a smooth complete intersection threefold of type $(a_3, a_4, \ldots, a_{n-1})$ containing $C$ that is smooth. By Theorem \ref{thm:main theorem}, we can then find a smooth complete intersection surface $S$ of type $(a_2, \ldots, a_{n-1})$ satisfying the hypotheses of Theorem \ref{thmc}. (See Remark \ref{assumption remark} for the $n=3$ case.) The conclusion follows by applying Theorem \ref{thmc}.
\end{proof}

We conclude with a restatement of Theorem \ref{thma} for very general curves, which follows from Theorem \ref{thmc} and the classical Noether-Lefschetz theorem.
\begin{thm}\label{thma1}
	Let $C \subset \P^{n}$ be a very general complete intersection curve of type $(a_1, \dots, a_{n-1 })$, with $$2 \leq a_1 \leq a_2 \leq \cdots \leq a_{n-1} , \quad \sum_{i=1}^{n-2} a_i\geq n+1, \quad \text{and}\quad 4\leq a_{n-1}.$$ Then for $r < n$, any morphism $f: C \to \P^r$ satisfying $$\deg f^* \str_{\P^r}(1) < \deg C$$ is obtained by projecting from an $(n-r-1)$-plane. Thus $\gon(C) = a_1 a_2 \cdots a_{n-1} - \gamma$, where $\gamma$ is the maximum number of points on $C$ contained in an $(n-2)$-plane.
\end{thm}

\begin{proof}
Again, we assume $n \geq 3$.
By \cite[Expos\'e XIX (1.2.1)]{SGA}, if $\sum_{i=1}^{n-2}a_i\geq n+1$, the very general complete intersection surface $S$ of type $(a_1,a_2, \cdots,a_{n-2})$ satisfies $\Pic(S)\cong \mathbb{Z}\cdot [\OO_S(1)]$.
We then conclude by applying Theorem \ref{thmc}.
\end{proof}


\section{$(n-2)$-planes secant to general complete intersection curves}
\label{n-2}
In this section, we study the behavior of secant $(n-2)$-planes to complete intersection curves in $\PP^n$ and prove Theorems \ref{thmb} and \ref{corb}. The secancy behavior to curves is a classical subject that has been studied by many people (see for example, \cite{Castelnuovo, CM, ELMS}). Let $n \geq 3$  and $2 \leq a_1 \leq \dots \leq a_{n-1}$ be integers.  In Theorem \ref{thm-existence}, we show that there exists an $(n-2)$-plane which is at least $(2n-2)$-secant to a complete intersection curve of type $(a_1, \dots, a_{n-1})$, unless one of the following conditions holds 
\begin{enumerate}
\item $n=3$ and $(a_1, a_2) = (2,2), (2,3)$ or $(3,3)$
\item $n=4$ and $(a_1, a_2, a_3) = (2,2,2)$. 
\end{enumerate} 
Moreover, we show that if $n \geq 4$ and $n-1 \leq a_1$, then the general complete intersection curve of type $(a_1, \dots, a_{n-1})$ does not have any $(n-2)$-planes which are $(2n-1)$-secant (see Theorem \ref{thm-nonexistence}). When $n=3$, it is well-known that a  general complete intersection with $a_1 \geq 4$ does not have a 5-secant line \cite[Corollary 2.8]{EF}. 
By Theorem \ref{thma}, the gonality of a complete intersection curve is given by projection from a linear $\mathbb{P}^{n-2}$. Consequently, we shall show that when $n-1 \leq a_1$, then the gonality of a general complete intersection of type $(a_1, \dots, a_{n-1})$ is $\prod_{i=1}^{n-1} a_i - 2n +2$.
\subsection{Preliminaries for Theorem \ref{corb}}

In this subsection, we recall the well-known fact that zero-dimensional schemes of length at most $2d+1$ impose independent condition on hypersurfaces of degree $d$ unless they contain a collinear subscheme of length at least $d+2$.

Let $Z$ be a zero-dimensional scheme in $\mathbb{P}^n$ of length $\ell(Z)$. We say that $Z$ {\em imposes independent conditions} on hypersurfaces of degree $d$ if $$h^0(\mathcal{I}_Z(d)) = h^0(\mathcal{O}_{\mathbb{P}^n}(d)) - \ell(Z)$$ or equivalently if $h^1(\mathcal{I}_Z(d))=0$. The following proposition characterizes when $Z$ can fail to impose independent conditions if $d$ is large relative to $\ell(Z)$.

\begin{prop}\label{propConditions}
Let $d$ be a positive integer and let $Z\subset \mathbb{P}^n$ be a zero-dimensional scheme of length at most $2d+1$. Assume that the maximal length collinear subscheme $W$ of $Z$ has length $k$. Then 
\[
h^{1}(\mathbb{P}^{n},\mathcal{I}_Z(d))=\max\{0,k-d-1\}.
\]
In particular, $Z$ imposes independent conditions on hypersurfaces of degree $d$ if and only if $k \leq d+1$. 
\end{prop}

\begin{proof}
The following lemma allows us to do induction on $d$ and $n$.

\begin{lem}\label{lemma: residue}
Let $Z\subset \mathbb{P}^n$, $n\geq 1$, be a zero-dimensional subscheme
and $H\subset\mathbb{P}^n$ be a hyperplane.
Then we have the following commutative diagram
\[
\xymatrix{
& 0\ar[d] & 0\ar[d] & 0\ar[d] 
\\
0\ar[r]&\mathcal{I}_{R}(-1)\ar[r]\ar[d]&\mathcal{I}_Z\ar[r]\ar[d]&\mathcal{I}_{Z\cap H/H}\ar[r]\ar[d]&0\\
0\ar[r]&\mathcal{O}_{\mathbb{P}^n}(-1)\ar[r]\ar[d]&\mathcal{O}_{\mathbb{P}^n}\ar[r]\ar[d]&\mathcal{O}_{H}\ar[r]\ar[d]&0\\
0\ar[r]&\mathcal{O}_{R}(-1)\ar[r]\ar[d] &\mathcal{O}_{Z}\ar[r]\ar[d] &\mathcal{O}_{Z\cap H/H}\ar[r]\ar[d] &0\\
&0&0&0
}
\]
where $\mathcal{I}_{Z\cap H/H}$ is the ideal sheaf $Z\cap H$ in $H$ and $R$ is the residual subscheme of $Z\cap H$ in $Z$.
Moreover, $$\ell(Z)=\ell(R)+\ell(Z\cap H).$$
\end{lem}
\begin{proof}
The middle column is the standard ideal sheaf exact sequence.  Restricting the middle column to the hyperplane $H$, we obtain the right column of the diagram. We may extend the morphism $\mathcal{O}_Z\rightarrow\mathcal{O}_{Z\cap H/H}$ to the following exact sequence:
\[
0\rightarrow\ker\rightarrow\mathcal{O}_Z(-1)\xrightarrow{\cdot H}\mathcal{O}_Z\rightarrow\mathcal{O}_{Z\cap H/H}\rightarrow 0.
\]
We now define the subscheme $R\subset Z$ to be cut out by the ideal sheaf $\ker\subset \mathcal{O}_Z(-1)\cong \mathcal{O}_Z$, where the isomorphism follows from the fact that $Z$ is zero-dimensional.
Now one can easily construct the left column of the diagram. Since
\[
0\rightarrow H^{0}(\mathbb{P}^n,\mathcal{O}_{R}(-1))\rightarrow H^{0}(\mathbb{P}^n,\mathcal{O}_{Z})\rightarrow H^{0}(H,\mathcal{O}_{Z\cap H/H})\rightarrow 0
\]
is exact, we have
$\ell(Z)=\ell(R)+\ell(Z\cap H)$.
\end{proof}

We can now complete the proof of the proposition.  The statement is clearly true when $n=1$ or $d=1$. Let $W$ be a maximal collinear subscheme of $Z$ and let $\lambda$ be the line spanned by $W$.  Take a general hyperplane $H$ containing $\lambda$. By Lemma \ref{lemma: residue}, we have the following short exact sequence:
\[
0\rightarrow\mathcal{I}_R(d-1)\rightarrow\mathcal{I}_Z(d)\rightarrow\mathcal{I}_{Z \cap H/H}(d)\rightarrow 0,
\]
where $R$ denotes the residual subscheme of $W=Z\cap H$ in $Z$. Note that $\ell(W)=k \geq 2$, hence $\ell(R) \leq \ell(Z) - k \leq 2(d-1) + 1$.
We have $$h^{2}(\mathcal{I}_R(d-1))=h^{2}(\mathcal{O}_{\mathbb{P}^n}(d-1))=0.$$
Let $k'$ be the length of a maximal collinear subscheme of $R$. Note that $k' \leq k$. By induction on $n$ and $d$, we have
$$h^1(\mathcal{I}_R(d-1)) = \max\{0, k'-d\} \quad  \mbox{and} \quad h^{1}(\mathcal{I}_{W/H}(d))=\max\{0,k-d-1\}.$$
Therefore, if $k'\leq k\leq d$, then $$h^{1}(\mathcal{I}_R(d-1))= h^{1}(\mathcal{I}_{Z \cap H/H}(d))= h^{1}(\mathcal{I}_Z(d))=0.$$
On the other hand, if $k\geq d+1$, then $$k' \leq \ell(R)\leq 2d+1-(d+1)=d,  \quad \mbox{and hence} \quad h^{1}(\mathcal{I}_R(d-1))=0.$$
Thus, 
$$h^{1}(\mathcal{I}_Z(d))=h^{1}(\mathcal{I}_{Z \cap H/H}(d))=\max\{0,k-d-1\}.$$
\end{proof}

\subsection{Existence of $(2n-2)$-secant $(n-2)$-planes}

In this subsection, we shall prove Theorem \ref{thmb}, that a complete intersection curve in $\mathbb{P}^n$ of sufficiently large degree has a $(2n-2)$-secant $(n-2)$-plane. Our main tool is Castelnuovo's enumerative formula.

\begin{thm}\label{thm-existence}
Let $n\geq 3$ and $2\leq a_1\leq a_2 \leq \cdots \leq a_{n-1}$ be integers. 
Let $C\subset \mathbb{P}^n$ be a complete intersection curve of type $(a_1,\cdots,a_{n-1})$.
Then $C$ has an $(n-2)$-plane which is at least  $(2n-2)$-secant  to $C$ unless
$$n=3 \ \mbox{and} \ (a_1,a_2)=(2,2), (2,3), (3,3); \ \mbox{or} \ n=4 \  \mbox{and} \ (a_1,a_2,a_3)=(2,2,2).$$
\end{thm}
\begin{proof}
Castelnuovo computed the class of the locus of $(2n-2)$-secant $(n-2)$-planes to a curve $C$ in $\mathbb{P}^n$ \cite{Castelnuovo}.  A modern proof of a more general formula due to Macdonald can be found in \cite[VIII, Proposition 4.2]{ACGH} and the equivalence of the two formulae in this case is explained in \cite[Section 1]{ELMS}. The expected dimension of this locus is zero and the expected number of $(2n-2)$-secant $(n-2)$-planes is given by 
\[
C(d,g,n)=\sum_{i=0}^{n-1}\frac{(-1)^{i}}{n-i}{d-n-i+1\choose n-1-i}{d-n-i\choose n-1-i}{g\choose i},
\]
where $d$ and $g$ are the degree and the genus of the curve, respectively. To prove the theorem, we need to show that $C(d,g,n)$ is nonzero for complete intersection curves except for the degrees and $n$ specified in the theorem. 

\begin{prop}
Let $n\geq 3$ and $2\leq a_1\leq a_2 \leq \cdots \leq a_{n-1}$ be integers.  For a complete intersection curve of type $(a_1, \cdots, a_{n-1})$ in $\mathbb{P}^n$, the number $C(d,g,n)$ satisfies the following properties:
\begin{center}
\begin{tabular}{ c |c c c}
        & $C(d,g,n)=0$ & $C(d,g,n)<0$ & $C(d,g,n)>0$ \\ 
        \hline
$n=3$ & $(a_1,a_2)=(2,2),$ or $(2,3)$ or $(3,3)$ & $(a_1,a_2)=(2,4)$ & Otherwise \\      
 $n=4$ & $a_1=a_2=a_3=2$ & $a_1=a_2=2$ and $3\leq a_3\leq11$ & Otherwise\\  
 $n=5$ & Never & $a_1=a_2=a_3=a_4=2$ & Otherwise\\
 $n\geq6$ & Never & Never & Always
\end{tabular}
\end{center}
\end{prop}

\begin{proof}
The case where $n=3$, $d\geq 10$ is covered in \cite[Proposition 3.1]{HS}. The remaining cases when $n=3$ can easily be computed by hand.
Therefore, we now assume $n\geq 4$. We compare consecutive terms in the alternating sum for $C(d,g,n)$.
Suppose $0\leq i\leq n-2$ is even.
If we can show that the sum of the $i$-th and the $(i+1)$-th term is positive, for all $i$ even and $0\leq i\leq n-2$, then $C(d,g,n)>0$.

The sum of the $i$-th and the $(i+1)$-th term equals
$$A_i \cdot \left((i+1)(d-n-i+1)(d-n-i) -(n-i)(n-i-1)(g-i)\right),$$ where $A_i$ is the positive constant given by the following formula
$$A_i = \frac{g!}{(i+1)!(g-i)!}\cdot \frac{(d-n-i)!}{(n-i)!(d-2n+2)!}\cdot \frac{(d-n-i-1)!}{(n-i-1)!(d-2n+1)!}.$$
Hence, it suffices to determine when
\[
f(i)=(i+1)(d-n-i+1)(d-n-i)
-
(n-i)(n-i-1)(g-i)
\]
is positive.
We compute the derivative of $f$ with respect to $i$ for $0\leq i\leq n-2$.
\begin{align}
f'(i)&=6i^2+(-4d-2g+2)\cdot i+[(d-n)^2-(d-n)-1+2ng-g+n(n-1)]\nonumber\\
&\geq (-4d-2g+2)(n-2)+[(d-n)^2-(d-n)-1+2ng-g+n(n-1)]\nonumber\\
&=d^2+(-6n+7)d+3g+2n^2+2n-5\label{eqn:derivative}
\end{align}
For $n\geq 6$, using the fact that the genus of the complete intersection curve is 
\begin{equation}\label{eqn:genus}
g=\frac{d\left((\sum_{i=1}^{n-1} a_i)-n-1\right)}{2}+1,
\end{equation}
the term \ref{eqn:derivative} is clearly positive as long as $a_j\geq 2$ for all $1\leq j\leq n-1$.  We conclude that $f'(i)>0$ and therefore $f$ is increasing in $i$.
Similarly, for $4\leq n \leq 5$ if we additionally assume that $a_{n-1}\geq 3$, then $f'(i)>0$ and therefore $f$ is increasing in $i$.

Hence, to prove that $C(d,g,n)$ is positive for given values of $a_i$ and $n$, it suffices to show that $f(0)\geq 0$. Substituting \ref{eqn:genus} into  $f(0)=(d-n+1)(d-n)-n(n-1)g$,
we obtain
\[
f(0)=d\left(d-2n+1-\frac{n(n-1)(\sum_{i=1}^{n-1}a_i-n-1)}{2}\right).
\]
Since $d$ is positive, it suffices to analyze the term  $$h(d,n)= \left(d-2n+1-\frac{n(n-1)(\sum_{i=1}^{n-1}a_i-n-1)}{2}\right).$$ 
Differentiating $h(d,n)$ with respect to $a_i$,
we see that $$\frac{\partial h(d,n)}{\partial a_i} =  \frac{d}{a_i}-\frac{n(n-1)}{2}\geq 2^{n-2}-\frac{n(n-1)}{2}.$$  The latter quantity is positive if $n\geq 6$.
Consequently, for $n\geq 6$, $f(0)$ is increasing with respect to $a_i$. Now there are several cases.
\begin{enumerate}
\item If $n\geq 9$ and $a_1 = \cdots = a_{n-1}=2$, one sees that $f(0)$ is positive.  Therefore, when $n \geq 9$,  $C(d,g,n)$ is always positive for complete intersection curves.
\item If  $n = 8$ and $a_7 \geq 3$; or $n =7$ and $a_6 \geq 6$; or  $n=6$ and $a_5 \geq 27$, then $f(0)$ is positive. Therefore, when $6 \leq n \leq 8$, $C(d,g,n)$ is positive for complete intersection curves except in a finite number of cases. In these cases, a simple computer check easily shows that $C(d,g,n)$ is always positive.
\item If $n=4$ or $5$, assume that $5 \leq a_{n-2} \leq a_{n-1}$. Then 
$$\frac{\partial h(d,n)}{\partial a_i} =  \frac{d}{a_i}-\frac{n(n-1)}{2}\geq 2^{n-3} \times 5-\frac{n(n-1)}{2} > 0.$$ Hence, $f$ is increasing in the $a_i$. Moreover, it is easy to check that $f(0) > 0$ when $n=4, a_1=2, a_2 = a_3 =5$ and $n=5, a_1=a_2 = 2, a_3 = a_4 = 5$. We conclude that $C(d,g,n) > 0$ for complete intersection curves in $\mathbb{P}^4$ and $\mathbb{P}^5$ as long as $a_{n-2} \geq 5$. 

For each of the finitely many choices, $2 \leq a_1 \leq \dots \leq a_{n-2} \leq 4$, $C(d,g,n)$ is a polynomial in $a_{n-1}$ with positive leading coefficient. A computer can easily compute the largest root of this polynomial in each of the cases. When $n=5$ and $a_1 = a_2 = a_3 =2$, the largest root is between $2$ and $3$. Moreover, $C(d,g,n) < 0$ when $n=5$ and $a_4 =2$. In all other cases, the largest root is less than $a_3$, hence $C(d,g,n)$ is positive for all remaining cases. Similarly, when $n=4$ and $a_1 = a_2 = 2$, the largest root is between $11$ and $12$ and for $3 \leq a_3 \leq 11$, $C(d,g,n) < 0$. Moreover, when $a_3 =2$, $C(d,g,n) =0$. In all other cases, the largest root is less than $a_2$ and $C(d,g,n) >0$.
\end{enumerate}
This analysis completes the proof of the proposition.
\end{proof}
To conclude the proof of the theorem, we simply observe that since Castelnuovo's formula computes the class of the locus of $(2n-2)$-secant $(n-2)$-planes, this locus cannot be empty if the class is nonzero. 
\end{proof}

The cases when $C(d,g,n)$ is negative or zero have clear geometric explanations. 

\begin{exmp}
Let $C$ be a complete intersection of type $(2,a_2)$ in $\PP^3$. Let $Q$ be a quadric surface containing $C$. Then any line which is at least trisecant  to $C$ must be contained in $Q$. Conversely, any of the lines in $Q$ is $a_2$-secant to $C$. We conclude that in this case, $C$ does not have any $4$-secant lines for $a_2 \leq 3$. If $a_2 \geq 4$, $C$ has a one-parameter family of $4$-secant lines. In fact, these lines are $a_2$-secant to $C$. 
\end{exmp}

\begin{exmp}
Let $C$ be a complete intersection of type $(3,a_2)$ in $\PP^3$. Let $S$ be a cubic surface containing $C$. Then any $4$-secant line must be contained in $S$. Conversely, any line on $S$ is $a_2$-secant to $C$. Hence, when $a_2=3$, $C$ does not have any $4$-secant lines. Otherwise, assuming $S$ is smooth, $C$ has 27 $a_2$-secant lines. 
\end{exmp}

\begin{exmp}
Let $C$ be a complete intersection of type $(2,2,a_3)$ in $\PP^4$. Then the pencil of quadric threefolds containing $C$ has a singular member $Q$. The quadric $Q$ has a one-parameter family of planes. Hence, $C$ has a one-parameter family of $2a_3$-secant planes. 
Moreover, when $a_3=2$, $C$ has no $5$-secant planes. Indeed, any such $5$-secant plane must be contained in a member of the web of quadrics, but then the curve must intersect the plane at a length-$4$ subscheme. A contradiction.
\end{exmp}

\begin{exmp}
Let $C$ be a complete intersection of type $(2,2,2,2)$ in $\PP^5$. Then the web of quadrics containing $C$ contains a quadric of corank 2. Such a quadric has a one-parameter family of 3-planes. Consequently, $C$ has a one-parameter family of $8$-secant 3-planes.
\end{exmp}

\subsection{Non-existence of $(2n-1)$-secant $(n-2)$-planes}

In this subsection, we shall prove Theorem \ref{corb}, that when the degrees of the defining equations are sufficiently large, then the general complete intersection curve does not contain a $(2n-1)$-secant $(n-2)$-plane and has exactly $C(d,g,n)$-many $(2n-2)$-secant $(n-2)$-planes, all of them intersecting $C$ transversely.

\begin{thm}\label{thm-nonexistence}
Let $n\geq 4$,
$n-1\leq a_1\leq a_2\leq \cdots\leq a_{n-1}$.
Let $C\subset \mathbb{P}^{n}$ be the general complete intersection curve of type $(a_1,\cdots,a_{n-1})$.
Then $C$ does not have a $(2n-1)$-secant $(n-2)$-plane.
\end{thm}
\begin{proof}
Let $\Gamma$ denote the incidence correspondence parameterizing tuples $$(Z,\Lambda,X_{a_1},X_{a_2},\cdots,X_{a_{n-1}}),$$ where $\Lambda$ is an $(n-2)$ plane, $Z$ is a curvilinear zero-dimensional scheme of $\Lambda$ of length $2n-1$ and $X_{a_i}$ are hypersurfaces of degree $a_i$ containing $Z$. 
Let $\Gamma_k$ be the locally closed subscheme of $\Gamma$ defined by the condition that the maximal length among all collinear subschemes of $Z$ is $k$. 
To prove the theorem it suffices to show that the natural projection $\pi$  from $\Gamma$ to $\prod_{i=1}^{n-1} | \OO_{\PP^n}(a_i)|$ is not dominant, or each $\Gamma_k$ does not dominate.

Let $\pi_k$ denote the restriction of $\pi$ to $\Gamma_k$. The next lemma shows that if $k > a_2$, then $\pi_k$ cannot be dominant.
\begin{lem}\label{lemma:nolines}
Let $X_1$ and $X_2$ be general hypersurfaces in $\mathbb{P}^{n}$ of degrees $a_1$ and $a_2$ respectively, with $a_1+a_2>2n-4$. Then $X_1\cap X_2$ does not contain a line.
\end{lem}
\begin{proof}
Let $G$ be the Grassmannian of lines in $\PP^{n}$ and let $\tau$ be the tautological rank $2$ vector bundle on $G$.
There is a canonical isomorphism
$$H^{0}(\mathbb{P}^{n},\mathcal{O}(a_1)\oplus\mathcal{O}(a_2))\cong H^{0}(G,\Sym^{a_1}\tau^{*}\oplus \Sym^{a_2}\tau^{*}),$$
such that the zero locus of a global section of 
$\Sym^{a_1}\tau^{*}\oplus \Sym^{a_2}\tau^{*}$
in $G$ consists of the lines contained in the corresponding complete intersection \cite[p.26-27]{Bor}.
Since 
$\Sym^{a_1}\tau^{*}\oplus \Sym^{a_2}\tau^{*}$
is generated by global sections and has rank 
$(a_1+1)+(a_2+1)>2(n-1)=\dim G$,
a general section does not vanish.
\end{proof}

If 
$k > a_2$,
then $X_1$ and $X_2$ both contain the line spanned by the maximal collinear subscheme. Hence, by Lemma \ref{lemma:nolines}, $\pi_k$ cannot dominate.
We may assume that 
$k \leq a_2$. 

The space $\Theta$ of pairs $(Z, \Lambda)$, where $\Lambda$ is an $(n-2)$-plane and $Z$ is a zero-dimensional curvilinear subscheme of $\Lambda$ of length $2n-1$ is irreducible of dimension $$\dim(\Theta) = 2(n-1) + (2n-1)(n-2) = n (2n-3).$$ Let $\Theta_k$ denote the locus where the maximal length of collinear subschemes of $Z$ is $k$. Let $W$ be the collinear length $k$ subscheme. By Lemma \ref{lemma: residue}, there is a well-defined residual scheme $R\subset Z$ of $W$, which is also curvilinear of length $2n-1-k$. The dimension of the space of collinear subschemes in $\Lambda$ of length $k$ is $2(n-3) + k$. Consequently, the dimension of $\Theta_k$ is bounded by 
$$
\dim(\Theta_k) \leq 2(n-3) + k + (2n-1-k)(n-2) + 2(n-1) = n(2n-3) -k (n-3) + 2n-6.
$$
Observe that $k \geq 2$. If $k \leq a_1 +1$, then by Proposition \ref{propConditions}, $Z$ imposes independent conditions on $|\OO_{\PP^n}(a_i)|$ for $i \geq 1$. Hence, the fiber dimension $\Gamma_k$ over $\Theta_k$ is $$\sum_{i=1}^{n-1} \dim |\OO_{\PP^n}(a_i)| - (2n-1)(n-1).$$ 
Thus, 
\begin{align*}
    \dim \Gamma_k&\leq n(2n-3) -k (n-3) + 2n-6+\sum_{i=1}^{n-1} \dim |\OO_{\PP^n}(a_i)| - (2n-1)(n-1)\\
    &=\sum_{i=1}^{n-1} \dim |\OO_{\PP^n}(a_i)|-(k-2)(n-3)-1<\sum_{i=1}^{n-1} \dim |\OO_{\PP^n}(a_i)|
\end{align*}
and $\pi_k$ cannot dominate $\prod_{i=1}^{n-1}|\OO_{\PP^n}(a_i)|$.

If 
$a_2 \geq k > a_1 +1$,
then by Proposition \ref{propConditions}, $Z \in \Gamma_k$ imposes independent conditions on 
$|\OO_{\PP^n}(a_i)|$ for $i \geq 2$
and fails to impose independent conditions on 
$|\OO_{\PP^n}(a_1)|$ by $k-a_1-1$.
Hence, the fiber dimension of $\Gamma_k$ over $\Theta_k$ is 
$$\sum_{i=1}^{n-1} \dim | \OO_{\PP^n}(a_i)| - (2n-1)(n-1) + k -a_1 -1.$$
Therefore,
\begin{align*}
\dim\Gamma_k&\leq n(2n-3) -k (n-3) + 2n-6+\sum_{i=1}^{n-1} \dim | \OO_{\PP^n}(a_i)| - (2n-1)(n-1) + k -a_1 -1\\
&=\sum_{i=1}^{n-1} \dim | \OO_{\PP^n}(a_i)|-(k-a_1-1)(n-4)-(a_1-1)(n-3)-1<\sum_{i=1}^{n-1} \dim | \OO_{\PP^n}(a_i)|,
\end{align*}
and $\pi_k$ cannot be dominant. We conclude that the general $C$ does not contain a $(2n-1)$-secant $(n-2)$-plane.
\end{proof}
Let us recall the definition of the Clifford index.
\begin{defn}
The Clifford index of a curve $C$ is defined by:
$$
\Cliff(C):=\min\{\Cliff(\mathcal{L})\mid \mathcal{L}\in\Pic(C), h^0(\mathcal{L})\geq 2, h^1(\mathcal{L})\geq 2\}
$$
where $\Cliff(\mathcal{L}):=\deg(\mathcal{L})-2h^0(\mathcal{L})+2$.
\end{defn}

By Theorem \ref{thma1}, the gonality of a very general complete intersection curve of type $(a_1,a_2,\cdots,a_{n-1})$ where $\sum_{i=1}^{n-2} a_i \geq n+1$ and $a_{n-1}\geq 4$ is computed by linear projection from an $(n-2)$-plane.
Consequently, we deduce the following theorem, which implies Theorem \ref{corb}.
\begin{thm}\label{thmmain}
Let $n\geq 4$. Suppose either
\begin{enumerate}
    \item $n-1 \leq a_1 \leq \cdots \leq a_{n-1}$, $a_{n-1}\geq 4$ and $C$ be a \textbf{very general} complete intersection curve of type $(a_1, \dots, a_{n-1})$.
    \item $\max\{n-1,4\}\leq a_1<a_2\leq\cdots\leq  a_{n-1}$ and $C$ be a \textbf{general} complete intersection curve of type $(a_1, \dots, a_{n-1})$.
\end{enumerate}
Then $\gon(C)=\deg(C)-2n+2$ and the gonality of $C$ is computed by projection from $(n-2)$-planes.
Moreover, there are finitely many $(2n-2)$-secant $(n-2)$-planes and
these secant $(n-2)$-planes intersect $C$ at exactly $(2n-2)$ distinct points.
The Clifford index of $C$ equals $\gon(C)-2=\deg(C)-2n$.
\end{thm}

\begin{proof}
Applying Theorem \ref{thma1} and Theorem \ref{thma} respectively, we see that the gonality of $C$ is computed by linear projection in either case. Thus Theorems \ref{thm-existence} and \ref{thm-nonexistence} yield that $\gon(C) = \deg(C) -2n+2$. 

In order to prove the finiteness of the number of $(2n-2)$-secant $(n-2)$-planes, we follow the notation and idea used in the proof of Theorem \ref{thm-nonexistence}.

If 
$k > a_2$,
then $X_1$ and $X_2$ both contain the line spanned by the length $k$ collinear subscheme. Hence, by Lemma \ref{lemma:nolines}, $\pi_k$ cannot dominate.

So we assume hereafter that $k\leq a_2$, and we bound the dimension of the schemes $\Gamma_k$. We note that
$$
\dim(\Theta_k) \leq 2(n-3) + k + (2n-2-k)(n-2) + 2(n-1)=2n^2-2n-kn+3k-4.
$$
For $k\leq a_1+1$, by applying Proposition \ref{propConditions}, we see that the fiber dimension over $\Theta_k$ is
$$
\sum_{i=1}^{n-1} \dim |\OO_{\PP^n}(a_i)| - (2n-2)(n-1).
$$
Thus,
\begin{align*}
    \dim \Gamma_k&\leq 2n^2-2n-kn+3k-4+\sum_{i=1}^{n-1} \dim |\OO_{\PP^n}(a_i)| - (2n-2)(n-1)\\
    &=\sum_{i=1}^{n-1} \dim |\OO_{\PP^n}(a_i)|-(k-2)(n-3)\leq\sum_{i=1}^{n-1} \dim |\OO_{\PP^n}(a_i)|,
\end{align*}
with the last equality holds if and only if $k=2$.

For $a_1+1<k\leq a_2$, again by Proposition \ref{propConditions},
\begin{align*}
\dim\Gamma_k&\leq 2n^2-2n-kn+3k-4+\sum_{i=1}^{n-1} \dim | \OO_{\PP^n}(a_i)| - (2n-2)(n-1) + k -a_1 -1\\
&=\sum_{i=1}^{n-1} \dim | \OO_{\PP^n}(a_i)|-(k-a_1-1)(n-4)-(a_1-1)(n-3)<\sum_{i=1}^{n-1} \dim | \OO_{\PP^n}(a_i)|.
\end{align*}
So $\pi_k$ does not dominate.

We have shown that only $\pi_2$, amongst all $2\leq k\leq 2n-2$, can possibly dominate $\prod_{i=1}^{n-1} | \OO_{\PP^n}(a_i)|$.
Theorem \ref{thm-existence} shows that some irreducible component $\Gamma'$ of $\Gamma$ does dominate $\prod_{i=1}^{n-1} | \OO_{\PP^n}(a_i)|$.
Note that $\Gamma' \subset \Gamma_2$, and any such $\Gamma'$ must have same dimension as $\prod_{i=1}^{n-1} | \OO_{\PP^n}(a_i)|$ and the projection $\Gamma'\to \prod_{i=1}^{n-1} | \OO_{\PP^n}(a_i)|$ must be generically finite.
Moreover, note that the locus in $\overline{\Theta_2}$ where $Z$ is non-reduced forms a proper closed subscheme.
The preimage of this subscheme in $\Gamma$, intersecting with $\Gamma'$ cannot dominate $\prod_{i=1}^{n-1} | \OO_{\PP^n}(a_i)|$.
Thus, for general $C$, there are finitely many $(2n -2)$-secant $(n-2)$-planes, each of them intersecting $C$ at exactly $(2n-2)$ distinct points.
Since there are only finitely many linear systems that compute the gonality of $C$, 
by a theorem of Coppens and Martens \cite[Corollary 2.3.1]{CM},
the Clifford index of $C$, $\Cliff(C)=\gon(C)-2=a_1a_2\cdots a_{n-1}-2n$.
\end{proof}



It is well-known that when $n=3$ and $a_1 \geq 4$, the conclusion of the previous theorem holds \cite{HS}.

Theorem \ref{thm-nonexistence} is likely not optimal. We expect the following statement to hold.

\begin{conj}\label{conj-no2n-1}
Suppose $n\geq 4$, $2\leq a_1\leq a_2\leq \cdots\leq a_{n-1}$,
$\sum_{i=1}^{n-2} a_i\geq n+1$ and $a_{n-1} \geq n-1$.
Let $C\subset \mathbb{P}^n$ be a very general  complete intersection curve of type $(a_1,a_2,\cdots,a_{n-2},a_{n-1})$. Then $C$ has no $(2n-1)$-secant $(n-2)$-plane and its gonality is $\prod_{i=1}^{n-1}a_i - 2n+2$.
\end{conj}

We give a heuristic argument for the conjecture. We fix a very general complete intersection \textbf{surface} $X$ of type $(a_1, \dots, a_{n-2})$. Since $\sum_{i=1}^{n-2} a_i\geq n+1$, by the Noether-Lefschetz Theorem, $\Pic(X) = \ZZ \cdot [\OO_X(1)]$. 
Let $P$ be an $(n-2)$-plane. Then $P\cap X$ is a $0$-dimensional subscheme. Otherwise, $P\cap X$ would contain an irreducible curve $B$. Notice that $\deg(B) < \deg (X)$. Indeed, if $\deg(B) \geq \deg(X)$, we could choose a point $p$ not in $P$, and we would get a hyperplane section spanned by $B$ and $p$ of degree bigger than $\deg(X)$, contradicting that $X$ is nondegenerate. However, having $\deg (B) < \deg(X)$ would contradict the Noether-Lefschetz Theorem. We conclude that every $(n-2)$-plane intersects $X$ in a zero-dimensional scheme. 

Consider the incidence correspondence $\Gamma$ parameterizing triples $(Z, P, X_{a_{n-1}})$, where $P$ is an $(n-2)$-plane and $Z$ is a length $2n-1$ curvilinear subscheme of $X \cap P$ and $X_{a_{n-1}}$ is a hypersurface of degree $a_{n-1}$ containing $Z$. Also, consider the image of the projection of $\Gamma$ to the first two factors, call it $\Theta$. We claim that the projection of $\Theta$ to the second factor (i.e. the Grassmannian of $(n-2)$-planes) is dominant and generically finite. Indeed, for any $(n-2)$-plane $P$, $P\cap X$ is a length $\prod_{i=1}^{n-2}a_i$ zero-dimensional subscheme. If $\prod_{i=1}^{n-2}a_i < 2n-1$, then the incidence correspondence $\Gamma$ has to be empty, and we are done. Thus, we may assume $\prod_{i=1}^{n-2}a_i \geq 2n-1$. For $P$ general, $P\cap X$ is a set of distinct points. Therefore, $P\cap X$ has finitely many subsets $Z$ of size $2n-1$. For each subset $Z$ of $P\cap X$ of size $2n-1$, there is a hypersurface of degree $a_{n-1}$ containing $Z$, since $\dim|\OO_{\PP^n}(a_{n-1})|\geq2n-1$. 

Let $\Theta'$ be one of the irreducible components of $\Theta$ that dominate the Grassmannian via the second projection. It has dimension $2(n-1)$ equal to the dimension of the Grassmannian $\mathbb{G}(n-2, n)$. Let $(Z,P)$ be a general point of $\Theta'$. By Proposition \ref{propConditions}, if the largest collinear subscheme of $Z$ has length at most $a_{n-1} +1$, then $Z$ imposes independent conditions on $|\OO_{\PP^n}(a_{n-1})|$, hence the fiber of $\Gamma$ over $(Z,P)$  has dimension $\dim |\OO_{\PP^n}(a_{n-1})|- 2n+1$. Therefore,  the irreducible component(s) of $\Gamma$ over $\Theta'$ has dimension at most $\dim |\OO_{\PP^n}(a_{n-1})|-1$ and cannot dominate  $|\OO_{\PP^n}(a_{n-1})|$. If $Z$ fails to impose independent conditions on $|\OO_{\PP^n}(a_{n-1})|$, then $Z$ must have a collinear subscheme of length at least $a_{n-1} + 2$. Then the line spanned by this collinear subscheme would be contained in all the hypersurfaces containing $C$, contradicting that $C$ is a smooth, irreducible curve. 

The reason why this is not a rigorous proof is that $\Gamma$ may be reducible and a component dominating $|\OO_{\PP^n}(a_{n-1})|$ may fail to dominate $\Theta$. We do not expect this to happen, but we do not have an argument for this.

More generally, one can conjecture the following.

\begin{conj}\label{conj-correctno}
Let $C$ be a general complete intersection of type $(a_1, \dots, a_{n-1})$ in $\PP^n$. Assume that $\sum_{i=1}^{n-2} a_i \geq n+1$. Then $C$ has finitely many $(2n-2)$-secant $(n-2)$-plane and their number is determined by Castelnuovo's formula. 
\end{conj} 

\begin{conj}\label{conj-transverse}
Assume that $\sum_{i=1}^{n-2} a_i \geq n+1$. Then there exists a complete intersection curve of type  $(a_1, \dots, a_{n-1})$ in $\PP^n$ with finitely many $(2n-1)$-secant $(n-2)$-planes all of which are transverse to $C$.
\end{conj}

We remark that Conjecture \ref{conj-transverse} implies Conjecture \ref{conj-no2n-1}. Namely, in the heuristic argument, the loci where $Z$ is not reduced could not dominate $|\OO_{\PP^n}(a_{n-1})|$. If we restrict our attention to the locus where $Z$ is reduced, the dimension count in the heuristic argument is rigorous.

\section{A Cayley-Bacharach Conjecture}
\label{sec:CB}

By modifying the proof of Lemma \ref{lem:key lemma}, we are able to prove the following special case of one of the Cayley-Bacharach conjectures (see \cite[Conjecture CB12]{EGH}).

\begin{thm}
\label{CBconj}
Let $\Gamma$ be any subscheme of a zero-dimensional complete intersection of hypersurfaces of degrees $d_1 \leq \cdots \leq d_n$ in $\P^n$, so that $\Gamma$ is contained in a complete intersection surface $S$ of type $(d_3, \ldots, d_n)$ with $\Pic(S)$ generated by the hyperplane class. Set $$k := d_3 + \cdots + d_n - n -1.$$
If $\Gamma$ fails to impose independent conditions on hypersurfaces of degree $k + e+2 $, where $ 0 \leq e \leq d_2-1$, then $$\deg(\Gamma) \geq (e+1)\cdot d_3 \cdot d_4 \cdot \cdots \cdot d_n.$$ 
\end{thm}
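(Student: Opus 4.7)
The plan is to run the argument of Lemma \ref{lem:key lemma} essentially verbatim, with the failure of independent conditions playing the role that Cayley-Bacharach played there. First, since the complete intersection $S$ is arithmetically Cohen-Macaulay, the restriction map $H^0(\P^n, \str(k+e+2)) \to H^0(S, \str_S(k+e+2))$ is surjective, so the hypothesis descends to $S$: the scheme $\Gamma$ fails to impose independent conditions on $|\str_S(k+e+2)|$. By adjunction $K_S = \str_S(k)$, so this is exactly failure of independent conditions on $|K_S + (e+2)H|$.

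Next, I would pass to a subscheme where the stronger Cayley-Bacharach property holds. Let $\Gamma_0 \subseteq \Gamma$ be of minimal length among subschemes that still fail to impose independent conditions on $|K_S + (e+2)H|$. I claim $\Gamma_0$ satisfies Cayley-Bacharach with respect to this system: if not, one could remove a length-one piece from $\Gamma_0$ without restoring independent conditions, contradicting minimality. Then, applying Theorem \ref{thm:GH} with $L = \str_S(e+2)$ (in the extension to Gorenstein zero-dimensional subschemes, as $\Gamma_0$ may be non-reduced), I obtain a rank two vector bundle $\E$ on $S$ fitting in
\[
0 \to \str_S \to \E \to \I_{\Gamma_0, S}(e+2) \to 0,
\]
with $c_1(\E) = (e+2)H$ and $c_2(\E) = \deg \Gamma_0$.

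The argument now bifurcates on Bogomolov stability. If $c_1(\E)^2 - 4c_2(\E) \leq 0$, then $\deg \Gamma \geq \deg \Gamma_0 \geq (e+2)^2 d_S / 4 \geq (e+1)\, d_S$, using $(e+2)^2 \geq 4(e+1)$ for all $e \geq 0$. Otherwise $\E$ is Bogomolov unstable, and Theorem \ref{thm:Bog} gives $0 \to L' \to \E \to M' \otimes \I_Z \to 0$. Following the analysis of Lemma \ref{lem:key lemma}, I would write $L' = \str_S(\lambda)$ and $M' = \str_S(\mu)$, so $\lambda + \mu = e+2$ and the Bogomolov positivity condition forces $\lambda > \mu$. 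The composite $L' \to \E \to \I_{\Gamma_0, S}(e+2)$ is nonzero (otherwise $L' \hookrightarrow \str_S$ forces $\lambda \leq 0$, contradicting $\lambda > (e+2)/2$), so twisting down produces a nonzero section of $\str_S(\mu) \otimes \I_{\Gamma_0, S}$, exhibiting an effective curve $C_0 \in |\str_S(\mu)|$ containing $\Gamma_0$ and hence $\mu \geq 1$. Then $\deg \Gamma_0 \geq L' \cdot M' = \lambda \mu \cdot d_S$, and minimizing $\mu(e+2-\mu)$ over positive integers $\mu$ with $\mu < (e+2)/2$ gives $\lambda\mu \geq e+1$, yielding $\deg \Gamma \geq (e+1)\, d_3 \cdots d_n$.

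The main obstacle I anticipate is a technical one: the statement of Theorem \ref{thm:GH} quoted in the paper is for reduced $Z$, whereas a general subscheme $\Gamma_0$ of the zero-dimensional complete intersection need not be reduced. The minimality reduction above imposes no reducedness, so to proceed one needs the Gorenstein extension of the Serre/Griffiths-Harris construction applied to $\Gamma_0$ (which is essentially classical). Once this is in hand, all of the surface-geometric steps are formal analogues of those in Lemma \ref{lem:key lemma}, and the constraint $0 \leq e \leq d_2 - 1$ plays no role beyond matching the hypothesis of the EGH conjecture.
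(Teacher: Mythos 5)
Your proposal follows essentially the same route as the paper: descend the failure of independent conditions to $S$ via the ACM property, pass to a minimal failing subscheme, apply the Serre construction to obtain a rank-two bundle with $c_1 = (e+2)H$, and run the Bogomolov-instability analysis of Lemma \ref{lem:key lemma} with $e+2$ in place of $\alpha$ (your direct two-case treatment of the stable/unstable dichotomy is just a rephrasing of the paper's argument by contradiction). The one obstacle you flag---that $\Gamma_0$ may be non-reduced---is exactly what Theorem \ref{genserre} is designed for, and your minimality reduction already supplies its hypothesis: every proper subscheme $Z' \subsetneq \Gamma_0$ imposes independent conditions, so by Serre duality $\Ext^1(\mathcal{I}_{Z',S}(e+2), \str_S) = 0$ for all such $Z'$, and any nonzero extension class automatically yields a locally free $\E$.
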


\begin{rem}
In the notation of the original statement of the conjecture, we are considering the case $s=3$, and setting $m=k+e+2$. Notice that in this case, we are able to obtain a stronger bound than the one given in the conjecture.
\end{rem}

Before the proof, we need a slightly more general formulation of Theorem \ref{thm:GH} in order to deal with non-reduced 0-cycles.  

\begin{thm}[cf. {{{\cite[Proposition 3.9]{Laz}}}}]
\label{genserre}
Let $S$ be a smooth projective surface, $Z \subset S$ a zero-dimensional subscheme, and $L$ a line bundle on $S$. Given an element $\eta \in \Ext^1(L \otimes \mathcal{I}_{Z,S}, \str_S)$, denote by $\mathcal{F}_{\eta}$ the sheaf arising from the extension:
$$0 \to \str_S \to \mathcal{F}_{\eta} \to L \otimes \mathcal{I}_{Z,S} \to 0.$$
Then $\mathcal{F}_{\eta}$ fails to be locally free if and only if there exists a proper (possibly empty) subscheme $Z' \subsetneq Z$ such that $$\eta \in \Im \left\{ \Ext^1(L \otimes \mathcal{I}_{Z',S}, \str_S) \to \Ext^1(L \otimes \mathcal{I}_{Z,S}, \str_S)\right\}.$$
\end{thm}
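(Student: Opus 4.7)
The plan is to prove the equivalence by studying $\mathcal{F}_\eta$ alongside its double dual $\mathcal{F}_\eta^{\vee\vee}$, and to translate ``non-local-freeness of $\mathcal{F}_\eta$'' into a Serre-construction extension from a proper subscheme of $Z$. Since $\mathcal{F}_\eta$ sits in an extension of torsion-free sheaves, it is itself torsion-free of rank $2$, so $\mathcal{F}_\eta^{\vee\vee}$ is reflexive of rank $2$, which on a smooth surface forces it to be locally free. A key input is the vanishing $\mathcal{E}xt^i(T, \str_S) = 0$ for $i<2$ whenever $T$ is a $0$-dimensional sheaf on $S$; this comes from local duality applied to the codimension-$2$ support of $T$.

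For the ``if'' direction, suppose $\eta$ is the image of some $\xi \in \Ext^1(L \otimes \I_{Z',S}, \str_S)$ under the map induced by $L \otimes \I_{Z,S} \hookrightarrow L \otimes \I_{Z',S}$. Functoriality of extensions produces a commutative diagram
\[
\begin{tikzcd}[column sep=small]
0 \ar[r] & \str_S \ar[r] \ar[d, equal] & \mathcal{F}_\eta \ar[r] \ar[d] & L \otimes \I_{Z,S} \ar[r] \ar[d, hook] & 0 \\
0 \ar[r] & \str_S \ar[r] & \mathcal{F}_\xi \ar[r] & L \otimes \I_{Z',S} \ar[r] & 0,
\end{tikzcd}
\]
and the snake lemma identifies the middle vertical as an injection with torsion cokernel $L \otimes (\I_{Z',S}/\I_{Z,S})$, which is nonzero because $Z' \subsetneq Z$. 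The $\mathcal{E}xt$ vanishing above then gives $\mathcal{F}_\eta^{\vee\vee} \cong \mathcal{F}_\xi^{\vee\vee}$, and combined with $\mathcal{F}_\xi \hookrightarrow \mathcal{F}_\xi^{\vee\vee}$ this realizes $\mathcal{F}_\eta$ as a proper subsheaf of its own double dual, so $\mathcal{F}_\eta$ is not reflexive, hence not locally free.

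For the converse, assume $\mathcal{F}_\eta$ is not locally free and set $\mathcal{F}' := \mathcal{F}_\eta^{\vee\vee}$. Then $\mathcal{F}'$ is locally free of rank $2$ with $\det \mathcal{F}' = L$, the natural inclusion $\mathcal{F}_\eta \hookrightarrow \mathcal{F}'$ has nonzero torsion cokernel $Q$ supported on (a subscheme of) $Z$, and the two sheaves agree on the complement of the non--locally-free locus of $\mathcal{F}_\eta$. Because any torsion-free sheaf on a smooth surface is already locally free in codimension $\leq 1$, the section $\str_S \hookrightarrow \mathcal{F}_\eta \hookrightarrow \mathcal{F}'$ acquires no codimension-$1$ zeros in passing to $\mathcal{F}'$, so by the Serre construction there is an exact sequence
\[
0 \to \str_S \to \mathcal{F}' \to L \otimes \I_{Z',S} \to 0
\]
for some $0$-dimensional $Z' \subset S$. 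Comparing this with the original extension via the snake lemma yields an injection $L \otimes \I_{Z,S} \hookrightarrow L \otimes \I_{Z',S}$ with cokernel $Q \neq 0$, so $Z' \subsetneq Z$ as subschemes, and by construction $\eta$ is the image of the extension class of $\mathcal{F}'$ under $\Ext^1(L \otimes \I_{Z',S}, \str_S) \to \Ext^1(L \otimes \I_{Z,S}, \str_S)$.

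The main technical point is the Serre-construction input in the converse: verifying that the section $\str_S \to \mathcal{F}'$ vanishes only in codimension $2$, so that its zero scheme $Z'$ is genuinely $0$-dimensional and the extension takes the form ``line bundle by twisted ideal sheaf.'' This is controlled by the fact that $\mathcal{F}_\eta$ and $\mathcal{F}'$ agree off the non--locally-free locus of $\mathcal{F}_\eta$ (a $0$-dimensional subset of $Z$), so the vanishing behavior of the section is inherited from the original extension, whose cokernel $L \otimes \I_{Z,S}$ is already torsion-free.
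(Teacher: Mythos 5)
The paper offers no proof of this statement---it is quoted from \cite{Laz} (Prop.\ 3.9)---so there is no in-text argument to compare against; I will assess your proof on its own terms. Your argument is correct. The route through the double dual is a clean alternative to the more common local-to-global treatment (computing the sheaf $\mathcal{E}xt^1(L\otimes\I_{Z,S},\str_S)$, which is supported on $Z$, and characterizing local freeness by the condition that $\eta$ generate every stalk); your version trades that local analysis for the two global facts that reflexive sheaves on a smooth surface are locally free and that $\mathcal{E}xt^i(T,\str_S)=0$ for $i<2$ when $T$ has zero-dimensional support, and both directions of the equivalence then fall out of the same comparison diagram. The forward implication is complete as written. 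In the converse, the one point you assert rather than prove is that the injection $L\otimes\I_{Z,S}\hookrightarrow L\otimes\I_{Z',S}$ produced by the snake lemma is (up to scalar) the canonical inclusion of ideal sheaves, so that $Z'$ really is a subscheme of $Z$ and $\eta$ really is the image of the class of $\mathcal{F}'$ under the map in the statement. This follows from $\Hom(\I_{Z,S},\str_S)\cong H^0(\str_S)=\C$, which is itself a consequence of the vanishing $\mathcal{E}xt^{i}(\str_Z,\str_S)=0$ for $i\le 1$ that you already invoked; it is a one-line addition, not a flaw in the approach. Your handling of the genuinely delicate step---that the section of $\mathcal{F}'=\mathcal{F}_\eta^{\vee\vee}$ acquires no divisorial zero locus, so the Serre construction applies---is correct: the cokernel of $\str_S\to\mathcal{F}'$ contains $L\otimes\I_{Z,S}$ with zero-dimensional quotient, hence has at most zero-dimensional torsion, which rules out a divisorial component.
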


\begin{proof}[Proof of Theorem \ref{CBconj}]
Set $m := k +e +2$. 

First we show that there is some non-empty subscheme $Z \subseteq \Gamma$ such that $h^1(\mathcal{I}_{Z,\P^n}(m)) \neq 0$ but $h^1(\mathcal{I}_{Z',\P^n}(m)) = 0$ for all proper subschemes $Z' \subsetneq Z$ by induction on the length of $\Gamma$. Notice that since $h^1(\str_{\P^n} (m)) =0$, the condition $h^1(\mathcal{I}_{Z,\P^n}(m)) \neq 0$ is equivalent to $Z$ failing to impose independent conditions on $\str_{\P^n}(m)$, so if the proper subschemes of $\Gamma$ all satisfy $h^1(\mathcal{I}_{Z,\P^n}(m))=0$, we are done. Otherwise, there is some nonempty $Z \subsetneq \Gamma $ such that $h^1(\mathcal{I}_{Z,\P^n}(m)) \neq 0$, and we are done by the induction hypothesis. For the base case, assume $\Gamma$ consists of a single point. Then the only proper subscheme is the empty set, which trivially imposes independent conditions on $\str_{\P^n}(m)$, as desired. Since $\deg(\Gamma) \geq \deg(Z)$, we can replace $\Gamma$ with $Z$ for the remainder of the proof.

Since $S$ is a complete intersection with embedding line bundle $\str_S(1)$, we know $h^1(\str_S(m)) = 0$. Thus, we obtain the following commutative diagram with exact rows:

\begin{equation}
\xymatrix@=20pt{ h^0(\str_{\P^n}(m)) \ar[r] \arsurj[d] & h^0(\str_Z(m)) \ar[r] \ar[d]^\cong & h^1(\mathcal{I}_{Z,\P^n}(m)) \ar[r] \ar[d] & 0 \\
h^0(\str_{S}(m)) \ar[r] & h^0(\str_Z(m)) \ar[r]  & h^1(\mathcal{I}_{Z,S}(m)) \ar[r] & 0 \\
}
\end{equation} 

\noindent A straightforward diagram chase shows that $h^1(\mathcal{I}_{Z,\P^n}(m)) = 0 $ if and only if $h^1(\mathcal{I}_{Z,S}(m))=0$. Thus, since $\omega_S = \str_S(k)$, Serre duality yields 
$$ \Ext^1( \mathcal{I}_{Z,S}(e+2), \str_S) \cong h^1(\mathcal{I}_{Z,S}(m))^{\vee} \neq 0$$
and similarly $$ \Ext^1( \mathcal{I}_{Z',S}(e+2), \str_S) = 0.$$ Therefore, by Theorem \ref{genserre}, we can find a nontrivial extension 
\begin{equation}
0 \to \str_S \to \E \to \mathcal{I}_{Z,S}(e+2) \to 0
\end{equation}
with $\E$ locally free.

For the sake of contradiction, assume $$\deg(Z) < (e+1) \cdot d_3 \cdot d_4 \cdot \cdots \cdot d_n = (e+1) \cdot \deg(S).$$
Then $$c_1(\E)^2 - 4c_2(\E) = (e+2)^2 \deg(S) - 4 \deg(Z) > ((e+2)^2  - 4 (e+1)) \deg(S) \geq 0.$$ Thus, by Theorem \ref{thm:Bog}, $\E$ is Bogomolov unstable, and we can write it as an extension
$$0 \to L \to \E \to M \otimes \mathcal{I}_W \to 0,$$
where $L$ and $M$ are line bundles satisfying the conditions from Theorem \ref{thm:Bog} and $W \subset S$ is a finite subscheme. Since the Picard group of $S$ is generated by the hyperplane class, we can set $L = \str_S(\lambda)$.

Following the proof of Lemma \ref{lem:key lemma} ($e+2$ taking the place of $\alpha$), we conclude that 
$$2 \lambda > e +2 \geq 2,$$ $$e+2 > \lambda, \text{\, and}$$ 
$$ \lambda(e+2-\lambda)  \cdot \deg(S) \leq \deg(Z) < (e+2) \cdot \deg (S).$$
Combining these inequalities, we obtain $$e+2-\lambda = 1.$$
This implies $$ (e+1)  \cdot \deg(S) \leq \deg(Z) ,$$
which gives us a contradiction.
\end{proof}

 \bibliographystyle{alpha}
\bibliography{bibliography.bib}

\newcommand{\etalchar}[1]{$^{#1}$}
\begin{thebibliography}{ACGH85}

\bibitem[ACGH85]{ACGH}
Enrico {Arbarello}, Maurizio {Cornalba}, Phillip {Griffiths}, and Joe {Harris}.
\newblock {\em {Geometry of algebraic curves. Volume I.}}, volume 267.
\newblock Springer, Berlin, 1985.

\bibitem[B{\u a}d78]{Bad}
Lucian B{\u a}descu.
\newblock A remark on the {G}rothendieck-{L}efschetz theorem about the {P}icard
  group.
\newblock {\em Nagoya Math. J.}, 71:169--179, 1978.

\bibitem[Bal97]{Bal}
Edoardo Ballico.
\newblock On the gonality of curves in {$\bold P^n$}.
\newblock {\em Comment. Math. Univ. Carolin.}, 38(1):177--186, 1997.

\bibitem[Bas96]{Bas}
B\'en\'edicte Basili.
\newblock Indice de {C}lifford des intersections compl\`etes de l'espace.
\newblock {\em Bull. Soc. Math. France}, 124(1):61--95, 1996.

\bibitem[Bas12]{Bas2}
F.~Bastianelli.
\newblock On symmetric products of curves.
\newblock {\em Trans. Amer. Math. Soc.}, 364(5):2493--2519, 2012.

\bibitem[BCD14]{BCD}
Francesco Bastianelli, Renza Cortini, and Pietro De\, Poi.
\newblock The gonality theorem of {N}oether for hypersurfaces.
\newblock {\em J. Algebraic Geom.}, 23(2):313--339, 2014.

\bibitem[BDE{\etalchar{+}}17]{BDELU}
Francesco {Bastianelli}, Pietro {De Poi}, Lawrence {Ein}, Robert {Lazarsfeld},
  and Brooke {Ullery}.
\newblock {Measures of irrationality for hypersurfaces of large degree.}
\newblock {\em {Compos. Math.}}, 153(11):2368--2393, 2017.

\bibitem[Bog79]{Bog}
F~A Bogomolov.
\newblock {Holomorphic tensors and vector bundles on projective varieties},
  journal = {Mathematics of the {USSR}-Izvestiya}.
\newblock 13(3):499--555, jun 1979.

\bibitem[Bor90]{Bor}
Ciprian Borcea.
\newblock Deforming varieties of {$k$}-planes of projective complete
  intersections.
\newblock {\em Pacific J. Math.}, 143(1):25--36, 1990.

\bibitem[Cas89]{Castelnuovo}
Guido Castelnuovo.
\newblock {Una applicazione della Geometria enumerativa alle curve algebriche.}
\newblock {\em {Rend. Circ. Mat. Palermo}}, 3:27--37, 1889.

\bibitem[CL84]{CL}
Ciro Ciliberto and Robert Lazarsfeld.
\newblock On the uniqueness of certain linear series on some classes of curves.
\newblock In {\em Complete intersections ({A}cireale, 1983)}, volume 1092 of
  {\em Lecture Notes in Math.}, pages 198--213. Springer, Berlin, 1984.

\bibitem[CM91]{CM}
Marc Coppens and Gerriet Martens.
\newblock Secant spaces and {C}lifford's theorem.
\newblock {\em Compositio Math.}, 78(2):193--212, 1991.

\bibitem[DK73]{SGA}
Pierre Deligne and Nicholas Katz.
\newblock {\em Groupes de monodromie en g\'eom\'etrie alg\'ebrique. II.
  S\'eminaire de G\'eom\'etrie Alg\'ebrique du Bois Marie - 1967-69 - (SGA 7
  II)}, volume 340.
\newblock Springer-Verlag, 1973.
\newblock Lecture notes in mathematics.

\bibitem[EF01]{EF}
Philippe Ellia and Davide Franco.
\newblock Gonality, {C}lifford index and multisecants.
\newblock {\em J. Pure Appl. Algebra}, 158(1):25--39, 2001.

\bibitem[EGH96]{EGH}
David Eisenbud, Mark Green, and Joe Harris.
\newblock Cayley-{B}acharach theorems and conjectures.
\newblock {\em Bull. Amer. Math. Soc. (N.S.)}, 33(3):295--324, 1996.

\bibitem[EH16]{EH}
David Eisenbud and Joe Harris.
\newblock {\em 3264 and all that---a second course in algebraic geometry}.
\newblock Cambridge University Press, Cambridge, 2016.

\bibitem[ELMS89]{ELMS}
David {Eisenbud}, Herbert {Lange}, Gerriet {Martens}, and Frank-Olaf
  {Schreyer}.
\newblock {The Clifford dimension of a projective curve.}
\newblock {\em {Compos. Math.}}, 72(2):173--204, 1989.

\bibitem[Far01]{Far}
Gavril Farkas.
\newblock Brill-{N}oether loci and the gonality stratification of {$\mathscr
  M_g$}.
\newblock {\em J. Reine Angew. Math.}, 539:185--200, 2001.

\bibitem[GH78]{GH}
Phillip Griffiths and Joseph Harris.
\newblock Residues and zero-cycles on algebraic varieties.
\newblock {\em Ann. of Math. (2)}, 108(3):461--505, 1978.

\bibitem[GH85]{GH3}
Phillip Griffiths and Joe Harris.
\newblock On the {N}oether-{L}efschetz theorem and some remarks on
  codimension-two cycles.
\newblock {\em Math. Ann.}, 271(1):31--51, 1985.

\bibitem[GH94]{GH2}
Phillip Griffiths and Joseph Harris.
\newblock {\em Principles of algebraic geometry}.
\newblock Wiley Classics Library. John Wiley \& Sons, Inc., New York, 1994.
\newblock Reprint of the 1978 original.

\bibitem[Har77]{Har2}
Robin Hartshorne.
\newblock {\em Algebraic geometry}.
\newblock Springer-Verlag, New York-Heidelberg, 1977.
\newblock Graduate Texts in Mathematics, No. 52.

\bibitem[Har02]{Har}
Robin Hartshorne.
\newblock Clifford index of {ACM} curves in {$\mathbb{P}^3$}.
\newblock {\em Milan J. Math.}, 70:209--221, 2002.

\bibitem[HS11]{HS}
Robin Hartshorne and Enrico Schlesinger.
\newblock Gonality of a general {ACM} curve in {$\Bbb P^3$}.
\newblock {\em Pacific J. Math.}, 251(2):269--313, 2011.

\bibitem[Laz97]{Laz}
Robert Lazarsfeld.
\newblock Lectures on linear series.
\newblock In {\em Complex algebraic geometry ({P}ark {C}ity, {UT}, 1993)},
  volume~3 of {\em IAS/Park City Math. Ser.}, pages 161--219. Amer. Math. Soc.,
  Providence, RI, 1997.
\newblock With the assistance of Guillermo Fern\'andez del Busto.

\bibitem[Laz04]{Laz2}
Robert Lazarsfeld.
\newblock {\em Positivity in algebraic geometry. {I}}, volume~48 of {\em
  Ergebnisse der Mathematik und ihrer Grenzgebiete. 3. Folge. A Series of
  Modern Surveys in Mathematics [Results in Mathematics and Related Areas. 3rd
  Series. A Series of Modern Surveys in Mathematics]}.
\newblock Springer-Verlag, Berlin, 2004.
\newblock Classical setting: line bundles and linear series.

\bibitem[Lef21]{Lef}
Solomon Lefschetz.
\newblock On certain numerical invariants of algebraic varieties with
  application to abelian varieties.
\newblock {\em Trans. Amer. Math. Soc.}, 22(3):327--406, 1921.

\bibitem[Lop91]{Lop}
Angelo~Felice Lopez.
\newblock Noether-{L}efschetz theory and the {P}icard group of projective
  surfaces.
\newblock {\em Mem. Amer. Math. Soc.}, 89(438):x+100, 1991.

\bibitem[Mar96]{Mar}
Gerriet Martens.
\newblock The gonality of curves on a {H}irzebruch surface.
\newblock {\em Arch. Math. (Basel)}, 67(4):349--352, 1996.

\bibitem[Ras15]{Ras}
Nils Henry~Williams Rasmussen.
\newblock Pencils and nets of small degree on curves on smooth, projective
  surfaces of {P}icard rank 1 and very ample generator.
\newblock {\em Arch. Math. (Basel)}, 105(6):557--561, 2015.

\bibitem[Rei88]{Rei}
Igor Reider.
\newblock Vector bundles of rank {$2$} and linear systems on algebraic
  surfaces.
\newblock {\em Ann. of Math. (2)}, 127(2):309--316, 1988.

\bibitem[{Sta}18]{stacks-project}
The {Stacks Project Authors}.
\newblock \textit{Stacks Project}.
\newblock \url{https://stacks.math.columbia.edu}, 2018.

\end{thebibliography}

\end{document}